\newtheorem{theorem}{Theorem}
\newtheorem{lemma}{Lemma}[section]
\newtheorem{conj}{Conjecture}
\def\beq{ \begin{equation} }
\def\eeq{ \end{equation} }
\def\mn{\medskip\noindent}
\def\ep{\epsilon}
\def\square{\vcenter{\vbox{\hrule height .4pt
  \hbox{\vrule width .4pt height 5pt \kern 5pt
        \vrule width .4pt} \hrule height .4pt}}}
\def\TT{\mathbb{T}}
\def\ZZ{\mathbb{Z}}
\def\clearp{}
\begin{document}

\title{The Contact Process on Random Graphs 
\\and Galton-Watson Trees}
\author{Xiangying Huang and Rick Durrett \\
Dept.~of Math, Duke University}

\date{\today}						

\maketitle

\begin{abstract}
The key to our investigation is an improved (and in a sense sharp) understanding of the survival time of the contact process on star graphs. Using these results, we show that for the contact process on Galton-Watson trees, when the offspring distribution (i) is subexponential the critical value for local survival $\lambda_2=0$ and (ii) when it is geometric($p$) we have $\lambda_2 \le C_p$, where the $C_p$ are much smaller than previous estimates. We also study the critical value $\lambda_c(n)$ for ``prolonged persistence'' on graphs with $n$ vertices generated by the configuration model. In the case of power law and stretched exponential distributions where it is known $\lambda_c(n) \to 0$ we give estimates on the rate of convergence. Physicists tell us that $\lambda_c(n) \sim 1/\Lambda(n)$ where $\Lambda(n)$ is the maximum eigenvalue of the adjacency matrix. Our results show that this is accurate for graphs with power-law degree distributions, but not for stretched exponentials.
\end{abstract}

\section{Introduction}

In the contact process on a graph $G$, occupied sites become vacant at rate 1, and give birth onto vacant neighbors at rate $\lambda$.
Harris \cite{Harris} introduced the contact process on $G=\ZZ^d$ in 1974. The state at time $t$ is $\xi_t \subset \ZZ^d$.
It is often thought of as a model for the spread of species. In this case $\xi_t$ is the set of occupied sites, and sites in $\xi_t^c$ are vacant.
However, it can also be viewed as a spatial SIS epidemic model. In this case $\xi_t$ is the set of infected sites, and sites in $\xi_t^c$ are susceptible.
Both interpretations are common in the literature, so the reader will see both here.

Let $\xi_t^0$ be the process starting from only the origin occupied and let $\xi^1_t$ be the process starting from all sites occupied.
Harris introduced the critical value 
$$
\lambda_c = \inf\{ \lambda : P( \xi^0_t \neq \emptyset \hbox{ for all $t$} )  >0 \},
$$
and proved that on $\ZZ^d$ we have $0 < \lambda_c < \infty$. He also showed that for $\lambda > \lambda_c$, $\xi^1_t$ converges to a limit that is a nontrivial stationary distribution. A rich theory has been developed for the contact process on $\ZZ^d$. See Liggett's 1999 book \cite{Lig99} for a summary of much of what is known.

Pemantle \cite{Pem92} was the first to study the contact process on the tree $\TT^d$ in which each vertex has degree $d+1$.
Here, and in what follows, we assume $d\ge 2$ since $\TT_1=\ZZ$. Let $0$ be the root of the tree and let $P_0$ be the probability measure for the process starting from only the root occupied. Pemantle found that the contact process on $\TT^d$ has two critical values. 
\begin{align*}
\lambda_1&  = \inf\{ \lambda : P_0( \xi_t \neq \emptyset \hbox{ for all $t$}) > 0 \}, \\
\lambda_2&  = \inf\{ \lambda :  \liminf_{t\to\infty} P_0( 0 \in \xi_t) > 0 \}.
\end{align*}

\noindent
By deriving bounds on the critical values, he showed that $\lambda_1 < \lambda_2$ when $d \ge 3$. Liggett \cite{Lig96} 
settled the case $d=2$ by showing $\lambda_1 < 0.605 < 0.6609 < \lambda_2$. At about the same time, Stacey \cite{Sta96} gave a proof that $\lambda_1 < \lambda_2$ that did not rely on bounds. The stationary distributions and limiting behavior of the contact process on trees is an interesting subject that has been extensively studied. See Liggett's book \cite{Lig99} for an account of the results. 

\subsection{Results for star graphs}

Let $G_k$ be a star graph with center 0 and leaves $1, 2, \ldots, k$ and let $\xi_t$ be the set
 of vertices infected in the contact process at time $t$.
Write the state $\xi_t$ as $(i,j)$ where $i$ is the number of infected leaves and
$j=1$ if the center is infected and $j=0$ otherwise. We write $P_{i,j}$ for the law of the process
starting from $(i,j)$. 
Pemantle \cite{Pem92} was the first to study the persistence time of the contact process on stars.
See his Section 4. He did his analysis on the ``ladder graph'' $\{0, \ldots , n \} \times \{0,1\}$ so he ended up
with a very approximate superharmonic function $W(\xi)$. Let $i$ be the number of infected leaves,
and let $I(\xi)=1$  if the root is infected and $=0$ otherwise.
$$
W(\xi) = e^{-\lambda i/10} \left( 1 - I(\xi) \frac{ (e^{\lambda/10} - 1)}{\lambda} \right).
$$  
To make the connection change Pemantle's $n$ (the number of leaves) to our $k$ and note that
his birth rate $\lambda = \alpha/\sqrt{n}$. Pemantle has an interesting heuristic discussion on pages 2015--2016 
that explains why this form is reasonable. However the 10's that are thrown in to make it is easier to prove it is 
superharmonic ruin its accuracy. 

Here, following the approach in \cite{ChaDur}, we will reduce to a discrete time one dimensional chain,
we will only look at times when $j=1$. When the state is $(i,0)$ with $i>0$,
the next event will occur after exponential time with mean $1/(i\lambda + i)$. The
probability that it will be the reinfection of the center is $\lambda/(\lambda+1)$. 
The probability it will be the healing of a leaf is $1/(\lambda+1)$.
Thus, the number of leaf infections $N$ that will be lost while the center is healthy
has a shifted geometric distribution with success probability $\lambda/(\lambda+1)$, i.e.,
$$
 P(N=j) = \left( \frac{1}{\lambda+1}\right)^{j} \cdot \frac{\lambda}{\lambda+1}
 \quad\hbox{for $j\ge 0$}.
$$
Note that
$$
EN = \frac{\lambda+1}{\lambda} -1 = \frac{1}{\lambda}.
$$

The next step is to modify the chain so that the infection rate is 0 when the number
of infected leaves is at least
\beq
L =pk\quad\hbox{where}\quad p=\lambda/(1+2\lambda).
\label{Ldef}
\eeq
(The reader will see the reasons that underlie this choice later.) Note that for the modified chain the number of infected leaves is always $\le pk$
and the number of uninfected leaves is $\ge (1-p)k$. Thus if we look at the embedded discrete time process for 
the contact process on the star and only look at times when the center is infected,
the process dominates $Y_n$ where
\begin{center}
 \begin{tabular}{lc}
 jump & with prob \\
$ Y_n \to Y_n-1$ & $pk/D$ \\
$ Y_n \to \min\{Y_n+1,pk\}$  & $\lambda(1-p)k/D$ \\
$ Y_n \to Y_n-N$ & $1/D$
\end{tabular}
\end{center} 
Here $N$ is independent of $Y_n$ and the denominator
\beq
D = pk + \lambda(1-p)k + 1 \le k + \lambda k + 1 \le (2+\lambda)k.
\label{Dbds} 
\eeq

The fact that $Y_n$ is a reflecting random walk will simplify computations. We will use the process to lower bound survival times.
 Before the infection on the star graph goes extinct it will spend
most of its time near $pk$, (i) this does not lose much compared to the more accurate birth and death chain, which uses the 
actual number of infected leaves not just a bound, and (ii) we make only a small error when we return to continuous time 
by assuming that jumps happen at the maximum rate. In \cite{ChaDur} it is shown, see Lemma 2.2 on page 2339, that

\begin{lemma}
Suppose $\lambda \le 1$ and $\lambda^2 k \ge 50$. Let $L_0 = \lambda k/4$  and $S_0 =\frac{1}{4L_0} \exp(k\lambda^2/80)$. Then
$$
P_{L,i}\left( \inf_{t\le S_0} |\xi_t| \le 0.4 L_0 \right) \le 7 e^{-\lambda^2 L_0/80} \qquad\hbox{for $i=0,1$.}
$$
\end{lemma}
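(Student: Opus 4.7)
The plan is to work directly with the dominating discrete-time reflecting random walk $Y_n$ constructed above. Because the true leaf count of the contact process stochastically dominates $Y_n$ pathwise, and because the number of embedded steps falling in the continuous-time window $[0,S_0]$ is $O(k S_0)$ with failure probability exponentially small in $k S_0$ (Poisson-clock concentration, using $D \le (2+\lambda)k \le 3k$), it suffices to bound $P_L\bigl(\min_{n \le M} Y_n \le 0.4 L_0\bigr)$ by $6\, e^{-\lambda^2 L_0/80}$ for a suitable $M = O(k S_0)$; the slack in the clock estimate bumps the constant from $6$ to $7$.

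The heart of the proof is an exponential supermartingale $Z_n := e^{-\theta Y_n}$. For $y < pk$ the conditional moment generating function of one step factors as
\[
E\bigl[e^{-\theta(Y_{n+1}-Y_n)} \mid Y_n = y\bigr] = \frac{1}{D}\Bigl(pk\, e^\theta + \lambda(1-p)k\, e^{-\theta} + \frac{\lambda}{\lambda+1-e^\theta}\Bigr),
\]
valid for $e^\theta < \lambda+1$. This equals $1$ at $\theta = 0$, and a computation using $p = \lambda/(1+2\lambda)$ shows that its derivative at $\theta = 0$ equals $-(\lambda pk - 1/\lambda)/D$, strictly negative whenever the claimed bound is non-trivial (when it is not, there is nothing to prove). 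In that regime I would pick $\theta$ of the optimal order, determined by balancing the linear (drift) and quadratic terms in the Taylor expansion, safely below the singularity $\log(\lambda+1)$, so that the MGF stays $\le 1$; then $Z_n$ is a supermartingale on excursions of $Y$ away from the reflecting barrier $L = pk$.

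Optional stopping on $Z_n$ at the first time an excursion either returns to $L$ or reaches $a := 0.4 L_0$ gives that the probability of reaching $a$ in a single excursion is at most $e^{-\theta(L-a)}$, with $L - a \ge 0.2\,\lambda k$ for $\lambda \le 1$. A union bound over the $O(M)$ possible excursions, combined with $S_0 = e^{\lambda^2 k/80}/(4L_0)$, yields
\[
P_L\bigl(\min_{n \le M} Y_n \le a\bigr) \le M\, e^{-\theta(L-a)} \le 6\, e^{-\lambda^2 L_0/80}
\]
once the constants are calibrated; the required inequality $\theta(L-a) \ge \log M + \lambda^2 L_0/80$ holds with a comfortable margin, because the non-trivial regime forces $\lambda^3 k$ to be large. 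The starting state $i=0$ is handled by noting that the center is reinfected at rate $L\lambda$, which is large, so the process reaches some $(L',1)$ with $L' \ge L - O(1)$ essentially immediately, contributing only a negligible additional exceptional event.

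The main obstacle is the tuning of $\theta$. The compound-jump MGF $\lambda/(\lambda+1-e^\theta)$ blows up at $e^\theta = \lambda+1$, which caps the admissible $\theta$ and limits the supermartingale rate. The competition between the drift $\lambda pk - 1/\lambda$ and the variance contribution $\sim 1/\lambda^2$ from the $N$-jump is delicate, and matching the exponents precisely to $1/80$ in both $S_0$ and the failure probability requires a careful expansion of the MGF at $\theta = 0$ against the drift. Once that calibration is done, the Poisson-clock reduction and the $i=0$ case are routine.
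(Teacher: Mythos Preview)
The paper does not prove this lemma at all: it is quoted verbatim from \cite{ChaDur} (Lemma 2.2 there) and serves only as a benchmark against which the paper's sharper estimate \eqref{L24} is compared. So there is no ``paper's own proof'' of this particular statement to compare to.

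That said, your outline is exactly the strategy the paper uses to prove its improved version, Lemmas~\ref{super}--\ref{life}: build the exponential supermartingale $e^{\theta Y_n}$ for the dominating reflecting walk, use optional stopping on a single excursion from $L$ (Lemma~\ref{exit}), iterate over returns to $L$ (Lemma~\ref{return}), and then pass back to continuous time via the jump-rate bound $D\le (2+\lambda)k$ and a Chebyshev/concentration estimate on the number of steps (Lemma~\ref{life}). Your handling of the $i=0$ starting state and your identification of the main obstacle---the singularity of the compound-jump MGF at $e^\theta=\lambda+1$, which caps $\theta$---are also on the mark. The only place where the paper's argument differs in spirit is that it commits to the explicit choice $e^{\theta}=1/(1+\lambda/2)$ from the outset rather than optimizing, which is what lets it replace the $1/80$ by (essentially) $1/2$; for the weaker constant $1/80$ a cruder choice of $\theta$ of order $\lambda$ suffices, and your remark that the bound is vacuous unless $\lambda^3 k$ is moderately large (since the right-hand side is $7e^{-\lambda^3 k/320}$) correctly disposes of the regime where the drift $\lambda pk-1/\lambda$ might fail to dominate.
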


\noindent
In contrast our Lemma 2.4 will show that if $L=\lambda k/(1+2\lambda)$ and $b=\ep L$  
\beq
P_{L,1}\left( \inf_{t\le S} |\xi_t| \le bL \right) \le (3+\lambda)(1+\lambda/2)^{-\ep L}
\quad\hbox{where}\quad S=\frac{1}{(2+\lambda)2k}(1+\lambda/2)^{L(1-2\ep)}. 
\label{L24}
\eeq
Part of the improvement comes from simply replacing $L_0$ by $L$ and 0.4 by $\ep$, but the most 
important change is to construct a more accurate superharmonic function. If one is proving that a critical value is 0, as \cite{Pem92} and \cite{ChaDur} were, then it is not harmful to be off by a large constant factor, but if we are trying to get a good positive upper bound we need to be accurate.

In a companion paper we have shown that the improved lower bound is sharp. Let $T_{0,0}$ be the extinction time of the contact process on a star graph with $n$ leaves. We write $E_{i,j}$ for the expectation of the process starting from state $(i,j)$.

\mn
{\bf Lemma 4 in \cite{pertree}.} {\it Let $K = \lambda n/(n+1)$. For any $\epsilon>0$, the contact process on the star graph has 
\beq
E_{K,1} T_{0,0} \le (\log n) e^{(1+\epsilon)\lambda^2 n }.
\label{survub}
\eeq
when $n$ is sufficiently large.}

\noindent
If $\lambda^2n  \to\infty$ then the $\log n$ prefactor can be absorbed by changing $\ep$ however it is important if $\lambda = O(1/\sqrt{n})$, since in this case the exponential is $O(1)$.

In contrast, the lower bound time $T$ from \eqref{L24}, ignoring the prefactor, is
$$
(1+\lambda/2)^{L(1-2\ep)} \approx \exp\left( (1-2\ep) \frac{\lambda^2 k}{2(1+2\lambda)} \right).
$$
If $\lambda$ is small then the term in the exponential is about 1/2 the one in \eqref{survub}. Strictly speaking these results are not sharp (on the exponential scale) but a factor of 2 is much better than the factor of 80 that appears in \cite{ChaDur}. It is not clear which result gives the right answer. The result in \eqref{survub} is proved by looking at the first time the center becomes healthy and then all of the leaves become healthy before the center is reinfected. At first sight this bound seems crazy, but the calculations above show that it is fairly accurate. We have not been able to finding a good subharmonic function for $Y_n$ to find a better upper bound so we leave it to a clever reader to determine the nature of the large deviation event that wipes out the infection on the star.

\subsection{Galton-Watson trees}

Given an offspring distribution $p_k$, we construct a Galton-Watson trees as follows. Starting with the root, each individual has $k$ children with probability $p_k$. Pemantle has shown that 

\mn
{\bf Theorem 3.2 in \cite{Pem92}.} {\it  There are constants $c_2$ and $c_3$ so that if $\mu$ is the mean of the offspring distribution, 
then for any $k > 1$, if we let $r_k = \max\{ 2 , c_2 \log(1/kp_k)/\mu\}$ .}
\beq
\lambda_2 < c_3\sqrt{r_k \log r_k \log(k)/k}.
\label{Pemub}
\eeq

\mn
If the offspring distribution in the Galton-Watson tree is a stretched exponential 
$p_k = c_\gamma  \exp(-k^\gamma)$ with $\gamma<1$ then  $\log(1/kp_k) \sim k^\gamma$
and hence $ \lambda_2=0$.

Given this result, it is natural to ask about the critical values $\lambda_1$ and $\lambda_2$ when degrees have a geometric distribution. $p_k = (1-p)^{k-1} p$ for $k \ge 1$. The most interesting problem is to prove $\lambda_1 >0$. Here, we prove upper bounds. 

\begin{theorem} \label{ubglam1}
$\lambda_1 \le p/(1-p)$. 
\end{theorem}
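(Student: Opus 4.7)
The plan is to prove the upper bound by coupling the contact process on the Galton--Watson tree to a Galton--Watson branching process built out of ``first-passage, parent-to-child'' infection events, and then applying the classical supercriticality criterion.

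\textbf{Step 1 (Comparison process).} Start the contact process from the root infected. For each vertex $v$ that ever becomes infected, let $T_v$ denote the length of $v$'s first infected period. Since $v$'s healing clock is a rate-$1$ Poisson process independent of everything else, $T_v \sim \mathrm{Exp}(1)$. Call a child $w$ of $v$ \emph{activated by $v$} if $v$ transmits the infection to $w$ at least once during this first infected interval. Conditional on $v$ having $K_v$ children and on $T_v=t$, the number of children activated has distribution $\mathrm{Bin}(K_v,\,1-e^{-\lambda t})$, because along each of the $K_v$ parent--child edges the infection events form independent Poisson processes of rate $\lambda$.

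\textbf{Step 2 (Domination and Galton--Watson structure).} Call a vertex $w$ \emph{reached} if there is a path $0=v_0,v_1,\ldots,v_j=w$ from the root along which, at each step, $v_{i+1}$ is activated by $v_i$. Because the underlying graph is a tree, any first infection of $w$ must come through its parent, so the set of reached vertices is contained in the set of ever-infected vertices of the contact process. The pairs $(K_v,T_v)$ attached to different $v$ are mutually independent -- the $K_v$ because the underlying tree is Galton--Watson, and the $T_v$ (together with the Poisson infection clocks) by the graphical construction -- so the reached-vertex process is itself a Galton--Watson process whose offspring law is that of $N=\mathrm{Bin}(K,\,1-e^{-\lambda T})$, where $K$ has the geometric law $p_k=(1-p)^{k-1}p$ and $T\sim\mathrm{Exp}(1)$ is independent of $K$.

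\textbf{Step 3 (Conclusion).} By independence,
$$
E[N] \;=\; E[K]\cdot E[1-e^{-\lambda T}] \;=\; \frac{1}{p}\cdot\frac{\lambda}{1+\lambda}.
$$
This exceeds $1$ precisely when $\lambda > p/(1-p)$. For such $\lambda$ the comparison branching process is supercritical and so survives with positive probability; since survival of this process forces survival of the contact process, $P_0(\xi_t\neq\emptyset\text{ for all }t)>0$, and hence $\lambda_1 \le p/(1-p)$.

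The only step requiring real care is the i.i.d.\ claim in Step 2 -- that restricting to ``first parent-to-child transmissions'' genuinely yields a Galton--Watson tree with no hidden dependencies -- but this follows immediately from the disjointness of the Poisson clocks assigned to distinct vertices in the graphical construction, so no delicate estimate is hidden. Note also that $p<1$ forces the offspring mean $\mu=1/p>1$, so the underlying Galton--Watson tree is infinite with positive probability, which is the regime in which the theorem has nontrivial content.
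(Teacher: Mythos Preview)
Your proof is correct and follows essentially the same approach as the paper: both restrict the contact process to first infections that flow only from parent to child, observe that this yields a Galton--Watson process dominated by the original contact process, and compute the offspring mean as $E[K]\cdot\lambda/(\lambda+1)=\lambda/(p(\lambda+1))$, which exceeds $1$ exactly when $\lambda>p/(1-p)$. Your write-up is more careful about the mixed-binomial structure and the independence justification, whereas the paper simply notes that the per-child success probability is $\lambda/(\lambda+1)$ and that the (conditional) dependence among siblings is irrelevant for the mean.
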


\begin{proof} Modify the contact process so that births from a site can only occur on sites further from the root. Each vertex $x$ will be occupied at most once. If $x$ is occupied then it will give birth with probability $\lambda/(\lambda+1)$ onto each neighbor $y$. The birth events are not independent but that is not important. If we let $Z_n$ be the  number of sites at distance $n$ that are ever occupied, $Z_n$ is a branching process in which the offspring distribution has mean $\lambda /((\lambda+1)\cdot p)$ which is  $>1$ if $\lambda> p/(1-p)$.
\end{proof}

When $p_k = (1-p)^{k-1} p$, $\log(1/kp_k) \sim c_p k$, so \eqref{Pemub} gives a finite upper bound on $\lambda_2$. It is difficult to trace through all the calculations to get an explicit lower bound. However, Pemantle uses $e^{-1}/5 = 0.0735$ as the lower bound for the probability of long time survival starting with only the center of a large degree star graph occupied, while Lemma \ref{ignite} gives $1 - 3k^{-1/3}$ when the degree is $k$. 
This probability $e^{-1}/5$ appears cubed near the end of his proof, so we think that his bound is much worse than the following:

\begin{theorem} \label{ubglam2}
If $p_k = 2^{-k}$ for $k\ge 1$, then $\lambda_2 \le 2.5$.
\end{theorem}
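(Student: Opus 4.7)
The plan is to prove $\lambda_2 \le 2.5$ by coupling the contact process on the Galton-Watson tree with a supercritical branching process whose particles are ``ignited'' high-degree stars. Fix $\lambda=2.5$ and a threshold $k$ large enough that all the estimates below succeed. Call a vertex \emph{good} if its degree is at least $k$. Since $p_j = 2^{-j}$, the tail probability that a random vertex is good is $2^{-(k-1)}$, while the mean offspring number is $2$, so at depth $d$ below any vertex there are of order $2^{d-k+1}$ good descendants.

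First, I would use the ignition estimate (Lemma~\ref{ignite}, giving failure probability at most $3k^{-1/3}$) to show that starting from the root we find, within $O(k)$ generations, a good vertex that becomes ignited to level $L=\lambda k/(1+2\lambda)=5k/12$ with probability bounded away from $0$. This is done by walking down the tree until we hit a good vertex (finitely many steps in expectation) and then using finitely many Poisson births along the path to populate that good vertex.

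Second, I would use Lemma 2.4 / equation \eqref{L24} to conclude that an ignited good star $v$ keeps at least $\epsilon L$ infected leaves throughout a huge time window
\[
S = \frac{1}{2(2+\lambda)k}\,(1+\lambda/2)^{L(1-2\epsilon)} \approx 2.25^{(5k/12)(1-2\epsilon)},
\]
with failure probability at most $(3+\lambda)(1+\lambda/2)^{-\epsilon L}$, which is exponentially small in $k$. During this window I would count how many new good stars $v$ manages to ignite. Each infected leaf $w$ of $v$'s star is the root of its own geometric$(1/2)$ Galton-Watson subtree; while $w$ is infected it seeds its children at rate $\lambda$, and the induced branching process of infection on the subtree is supercritical for $\lambda = 2.5$ (compare with the calculation in Theorem~\ref{ubglam1}: the mean of the one-step-down offspring distribution is $2\lambda/(\lambda+1) = 10/7 > 1$). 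Since this local branching is supercritical, in $O(k)$ generations we hit $\Theta(2^{d-k+1})$ good descendants of $v$, and Lemma~\ref{ignite} ignites each with probability $1-3k^{-1/3}$ conditional on being reached.

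Third, I would collect these observations into a single branching process on good stars. The mean number $M(k)$ of ignited children of a given ignited good star is, up to constants, the product of (number of leaf-subtrees) $\times$ (probability that a leaf-subtree finds and ignites a good descendant during time $S$). The former grows like $\lambda k$; the time $S$ is doubly exponential in $k$, so the ``time-to-find'' factor costs only a polynomial in $k$; and the ignition factor is $1-o(1)$. Thus $M(k)\to\infty$ as $k\to\infty$, so for some finite $k$ the process is supercritical, giving local survival and hence $\lambda_2 \le 2.5$.

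The main obstacle is numerical: showing that $\lambda = 2.5$ (and not some larger constant) is sufficient. The delicate step is the transmission estimate in the second paragraph, where one must route infection through up to $\sim k$ low-degree vertices while paying per-step transmission factors of order $\lambda/(1+\lambda)=5/7$. This must be balanced against the $2^{d-k+1}$ good descendants available, and choosing the path-length $d$ to make both factors work simultaneously is where the constant $2.5$ enters quantitatively; a careful coupling with a biased random walk on the GW tree (exploiting that $2\lambda/(\lambda+1)>1$) will be the technical crux.
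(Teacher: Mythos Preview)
Your outline has a genuine gap: a supercritical branching process of ignited stars, spreading from each star to good \emph{descendants}, establishes global survival but not local survival. If the ignited stars only produce offspring further from the root, then with probability one the infection drifts to infinity and the root is visited only finitely often; this yields $\lambda_1\le 2.5$ (already dominated by Theorem~\ref{ubglam1}, which gives $\lambda_1\le 1$ here) rather than $\lambda_2\le 2.5$. The sentence ``the process is supercritical, giving local survival'' is exactly the unjustified step: on trees, supercriticality of a forward branching structure is the hallmark of the intermediate phase $(\lambda_1,\lambda_2)$, not of local survival.

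The paper's proof closes this gap with a different architecture. It does not build a branching process at all. Instead it fixes $r=k$, pushes from the root (assumed of degree $k$) to the $\sim k/2$ degree-$k$ vertices at distance $r$ using Lemma~\ref{infect}, and then \emph{pulls back}: if some such $v^i_r$ is infected at time $t-S$, the same push estimate run in reverse shows $0$ is infected at time $t$ with probability $\ge (1-\delta)^2$. This produces a recursive inequality $f(t)\ge H(\inf_{s\le t-S}f(s))$ with $H(x)=(1-\delta)^3(EM)x\wedge\varepsilon$, and Pemantle's Lemma~\ref{magic} then gives $\liminf_t f(t)>0$. The constant $2.5$ enters through the single numerical inequality
\[
\frac{\lambda}{\lambda+1}\,(1+\lambda/2)^{\lambda/(1+2\lambda)}\;=\;1.0014\ldots\;>\;1\qquad\text{at }\lambda=2.5,
\]
which is precisely the condition that the survival time $S\sim(1+\lambda/2)^{L}$ of the root star beats the per-path transmission cost $(\lambda/(\lambda+1))^{-r}$ with $r=k$. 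Your proposal gestures at this balance but never pins it down, and more importantly never supplies a mechanism (the pull-back plus Lemma~\ref{magic}) for returning infection to the root.
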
 

\noindent
This result is proved by combining our new estimates for the contact process on stars with the mysterious Lemma 2.4 in Pemantle's paper \cite{Pem92} (see Lemma \ref{magic} below).

The proof works for a general geometric $p_k = (1-p)^{k-1}p$, $k \ge 1$. We cannot get a nice formula for the upper bound 
as a function of $p$ but the upper bounds can easily be computed numerically and graphed.
These upper bounds are only interesting for small $p$. A Galton-Watson tree with $p_0=0$ and $p_1<1$ contains a copy of $\ZZ$ (start with a vertex with two children
and follow their descendants) so using Liggett's bound on $\lambda_c(\ZZ)$ proved in \cite{Ligbd} we conclude $\lambda_2 \le 2$ for all $0<p<1$.

\begin{figure}[tbp] 
  \centering
  \includegraphics[bb=53 58 737 555,width=4.09in,height=2.97in,keepaspectratio]{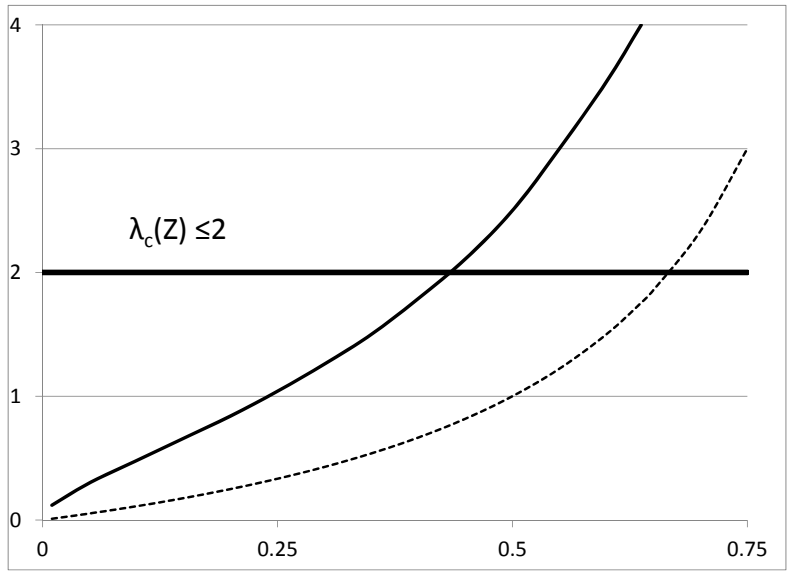}
  \caption{Upper bounds on $\lambda_2$ (solid line) and $\lambda_1$ (dotted line) as a function of $p$ for the geometric degree distribution. The graph is computed by using \eqref{suff}.}
  \label{fig:lambda2}
\end{figure}

In addition, the proof of Theorem \ref{ubglam2} yields an improvement of Pemantle's result for stretched exponential distributions. We say that
$p_k$ is subexponential if 
$$
\limsup_{k\to\infty} (1/k)\log p_k = 0.
$$

\begin{theorem} \label{subexp}
If the offspring distribution $p_k$ for a Galton-Watson tree is subexponential and has mean $\mu>1$ then $\lambda_2=0$.
\end{theorem}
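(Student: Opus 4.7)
The plan is to mirror the proof of Theorem \ref{ubglam2} but exploit subexponentiality to make it work for every $\lambda>0$. Fix $\lambda>0$ and a small $\eta>0$. By subexponentiality, $\log(1/p_k)/k\to 0$, so for any $\delta>0$ there exist arbitrarily large $k$ with $p_k\ge e^{-\delta k}$; we will pick $\delta$ small depending on $\lambda$ and then pick such a $k$ very large, and show the contact process with this rate $\lambda$ satisfies $\liminf_{t\to\infty}P_0(0\in\xi_t)>0$.

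First I would locate a supercritical skeleton of high-degree stars in the Galton-Watson tree. Since the mean is $\mu>1$, generation $m$ of the tree has roughly $\mu^m$ vertices, of which an $e^{-\delta k}$ fraction have degree $\ge k$. Taking $m_0=\lceil(1+\eta)\delta k/\log\mu\rceil$ and iterating, conditional on non-extinction, any vertex has (with probability bounded away from $0$) multiple descendants of degree $\ge k$ at distance $\le m_0$, and each of those in turn has multiple such descendants, etc. So one embeds inside the tree a supercritical ``tree of stars'' in which every star has degree $\ge k$ and consecutive stars are joined by a path of length at most $m_0$.

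Next I would turn this skeleton into a supercritical branching process of \emph{ignited} stars using the contact-process estimates of Section 2. An ignited star (with $\ge L=\lambda k/(1+2\lambda)$ infected leaves) stays ignited for time $S\ge\exp(c\lambda^2 k)$ with $c=(1-2\epsilon)/(2(1+2\lambda))$ by \eqref{L24}. During this window it repeatedly sends the infection along each adjacent connecting path of length $\le m_0$; the probability that a single such attempt survives to the next star center is at least $(\lambda/(1+\lambda))^{m_0}$, and by Lemma \ref{ignite}, once the center's leaves get infected, ignition occurs with probability $1-O(k^{-1/3})$. Hence the expected number of child stars an ignited star ignites before dying is at least
\[
(\text{\# descendant stars})\cdot\frac{S}{m_0}\cdot\left(\frac{\lambda}{1+\lambda}\right)^{m_0}\cdot\bigl(1-O(k^{-1/3})\bigr),
\]
which on the exponential scale is $\exp\!\bigl(k\,[c\lambda^2-(1+\eta)(\delta/\log\mu)\log((1+\lambda)/\lambda)]\bigr)$.

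Because subexponentiality lets us take $\delta$ arbitrarily small, we can make the bracket strictly positive. The resulting branching process of ignited stars is supercritical and so survives with positive probability; applying Lemma \ref{magic} (Pemantle's Lemma 2.4) then transfers this skeleton-survival back into $\liminf_{t\to\infty}P_0(0\in\xi_t)>0$, giving $\lambda_2\le\lambda$. Since $\lambda>0$ was arbitrary, $\lambda_2=0$. The main obstacle I expect is step two: extracting from the Galton-Watson tree an embedded ``tree of stars'' with enough independence to run a genuine branching argument, and verifying that conditioning on this geometry does not degrade the star-graph survival estimate \eqref{L24} or the ignition bound from Lemma \ref{ignite}. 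The machinery of Lemma \ref{magic} together with the Section 2 estimates is designed precisely for this reduction, but keeping the various exponential rates aligned uniformly as $\delta\to 0$ is the delicate bookkeeping.
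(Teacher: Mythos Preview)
Your proposal is essentially the same approach as the paper's. Both arguments exploit subexponentiality to find arbitrarily large $k$ with $p_k\ge e^{-\delta k}$ (equivalently $p_k\ge(1-\delta)^k$), choose the star-to-star distance $r\sim \delta k/\log\mu$, and then observe that the key balance condition \eqref{suff} --- which on the exponential scale is exactly your bracket $c\lambda^2-(r/k)\log((1+\lambda)/\lambda)>0$ --- can be satisfied for \emph{any} fixed $\lambda>0$ by taking $\delta$ small enough.

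The only real difference is in packaging. You describe an iterated ``tree of stars'' and a supercritical branching process of ignited stars, then invoke Lemma~\ref{magic} to convert skeleton survival into $\liminf_t P_0(0\in\xi_t)>0$. The paper is more economical: it simply reruns the two-step argument from the proof of Theorem~\ref{ubglam2} (push to one generation of degree-$k$ vertices at distance $r$, then pull back to the root), and applies Lemma~\ref{magic} directly to $f(t)=P(0\in\xi^0_t)$ via the recursive inequality $f(t)\ge H(\inf_{s\le t-S}f(s))$. This avoids having to build and control an infinite embedded skeleton with independence, which is precisely the bookkeeping you flagged as the main obstacle; the one-level recursion plus Lemma~\ref{magic} already closes the argument. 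Your version would work too, but the paper's route sidesteps the delicate independence issues you were worried about.
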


\noindent
Note that $\lambda_2=0$ implies $\lambda_1=0$.

In the version of this paper submitted for publication in ALEA, we conjectured that the result in Theorem \ref{subexp} is sharp. This has recently been proved by 

\mn
{\bf  Bhamidi, Nam, Nguyen, and Sly \cite{Slyetal}} {\it Consider the contact process on the Galton-Watson tree with offspring distribution
$\zeta$, and suppose that only the root of the tree is initially infected. If $E(\exp(c\zeta))<\infty$
for some $c > 0$, then $\lambda_1>0$.}

\mn
They also prove results for random graphs. See \cite{Slyetal} for more details.

\subsection{Finite graphs}

Consider the contact process on $\{ -n, \ldots n \}$ starting from all sites occupied and let $\tau_n = \inf \{ t : \xi_t = \emptyset\}$.  Combining results of Durrett and Liu \cite{DLiu} and  Durrett and Schonmann \cite{DSch} gives the following results 

\mn
(i) If $\lambda < \lambda_c$ then there is a constant $\gamma_1(\lambda)$ so that 
$$
\tau_n/\log n \to \gamma_1(\lambda)\quad\hbox{in probability.}
$$
(ii) If $\lambda > \lambda_c$ then there is a constant $\gamma_2(\lambda)$ so that 
$$
(\log \tau_n)/n \to \gamma_2(\lambda)\quad\hbox{in probability.}
$$
(iii) When $\lambda > \lambda_c$  there is  ``metastability'':
$$
\tau_n/E\tau_n \Rightarrow \hbox{exponential}(1)
$$
where $\Rightarrow$ means convergence in distribution. Intuitively, the process on the interval stays exponentially long in a state that looks like the stationary distribution for the process on $\ZZ$, and then suddenly dies out.

Results on $\ZZ^d$ with $d>1$ had to wait for the work of Bezuidenhout and Grimmett \cite{BezGr}, who showed that in $d>1$ the contact process dies out at the critical value and in doing so introduced a block construction that can be used to study the supercritical process. Mountford \cite{Mountmet} proved the metastability result in 1993 and that $(\log \tau_n)/n^d \to \gamma(\lambda)$
in 1999, see \cite{Mountcrit}. 

Stacey \cite{Sta01} studied the contact process on a tree truncated at height $\ell$, $\TT^d_\ell$. To be precise, the root has degree $d$,
vertices at distance $0< k < \ell$ from the root have degree $d+1$, while those at distance $\ell$ have degree 1.
Cranston, Mountford, Mourrat, and Valesin  improved Stacey's result to establish that the time to extinction
starting from all sites occupied $\tau_\ell^d$ satisfies

\begin{theorem} \label{fintree}
\cite{CMMV} (a) For any $0< \lambda < \lambda_2(\TT^d)$ there is a $c \in (0,\infty)$ so that as $\ell\to\infty$
$$
\tau_\ell^d /\log|\TT^d_\ell| \to c \quad\hbox{in probability.}
$$
(b) For any $\lambda_2(\TT^d) < \lambda < \infty$ there is a $c \in (0,\infty)$ so that as $\ell\to\infty$
$$
\log(\tau_\ell^d) /|\TT^d_\ell| \to c \quad\hbox{in probability.}
$$
Moreover $\tau_\ell^d/E\tau_\ell^d$ converges to a mean one exponential.
\end{theorem}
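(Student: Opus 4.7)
\mn
\textbf{Proof proposal for Theorem \ref{fintree}.}
The split is governed by $\lambda_2(\TT^d)$, the critical value for local survival on the infinite tree, and since $|\TT^d_\ell|$ grows like $d^\ell$ we have $\log|\TT^d_\ell| \asymp \ell$; so part (a) predicts $\tau_\ell^d$ linear in $\ell$, while part (b) predicts $\tau_\ell^d$ exponential in $d^\ell$. The plan is to prove matching upper and lower bounds in each regime and then derive metastability from a standard regeneration/restart argument.

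For part (a), I would first record on the infinite tree $\TT^d$ that $\lambda < \lambda_2$ implies exponential decay $\sup_v P(v \in \xi_t) \le C e^{-\gamma t}$ for all finite initial conditions, with $\gamma = \gamma(\lambda) > 0$; this bound is known throughout the subcritical and intermediate regimes. A finite-speed-of-propagation coupling between $\TT^d_\ell$ and $\TT^d$ transfers this estimate to any vertex whose distance to the leaves exceeds the infection's spread up to time $t$, and since the infected region can grow at depth at most $O(t)$, taking $t = \kappa \log|\TT^d_\ell|$ for $\kappa$ large and applying a union bound over vertices yields $\tau_\ell^d \le \kappa \log|\TT^d_\ell|$ w.h.p. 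For the matching lower bound, partition $\TT^d_\ell$ into $\Theta(|\TT^d_\ell|)$ disjoint small subtrees; starting from all sites occupied, each subtree furnishes a nearly independent survival attempt with fixed positive probability of lasting time $T$, and the maximum over $\Theta(|\TT^d_\ell|)$ such attempts is of order $\log|\TT^d_\ell|$.

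For part (b), $\lambda > \lambda_2$ gives local survival on $\TT^d$, and the main ingredient is a Bezuidenhout--Grimmett style block/renormalization construction adapted to trees: one shows the contact process on $\TT^d_\ell$ dominates a supercritical oriented percolation on a coarse grid of finite-depth blocks. This gives, with probability $1 - o(1)$, that a positive density of infected blocks persists for time $\exp(c|\TT^d_\ell|)$, hence the lower bound $\log \tau_\ell^d \ge c|\TT^d_\ell|$. The matching upper bound is softer: in any unit time window the probability that every currently infected site heals before any reinfection is at least $\exp(-C|\TT^d_\ell|)$, so $E\tau_\ell^d \le \exp(C|\TT^d_\ell|)$. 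Metastability then follows the usual Mountford recipe: the process forgets its initial condition and equilibrates to a quasi-stationary distribution on a timescale much shorter than $E\tau_\ell^d$, after which the exit time is essentially memoryless, so $\tau_\ell^d / E\tau_\ell^d \Rightarrow \mathrm{Exp}(1)$.

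The main obstacle is the lower bound in (b). On $\ZZ^d$ the Bezuidenhout--Grimmett construction exploits polynomial volume growth and recurrence of random walks, neither of which is available on a tree: boundaries expand exponentially and there is no natural way to ``return'' to a block once the infection has moved outward. One must instead build the renormalization around finite subtrees of bounded depth and assemble many parallel copies of a surviving block inside $\TT^d_\ell$, using the fact that local survival on $\TT^d$ implies survival with density close to 1 on sufficiently deep finite balls. A second delicate point is extracting the existence of an exact constant $c$ in both (a) and (b) rather than only matching inequalities, which calls for a subadditivity-type argument in $\ell$ that must cope with the absence of translation invariance on $\TT^d_\ell$.
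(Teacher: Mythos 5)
Note first that the paper does not prove Theorem \ref{fintree}: it is quoted as a result of Cranston, Mountford, Mourrat, and Valesin \cite{CMMV}, building on Stacey \cite{Sta01}, so there is no in-paper proof to compare against. Your outline does capture the broad architecture of that work (local-extinction estimates and domination for the logarithmic regime, a renormalization lower bound plus a trivial union-bound upper bound for the exponential regime, and a restart argument for metastability), and you are right to flag the tree renormalization and the extraction of an exact constant as the genuinely hard steps.

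Two concrete points in the sketch deserve attention. For the upper bound in (a), the tool you reach for -- a finite-speed-of-propagation coupling -- is both unnecessary and slightly off: since $\TT^d_\ell\subset\TT^d$, the contact process on the truncated tree is stochastically dominated by the process on $\TT^d$ with the same initial set, with no caveat near the boundary; then additivity of the graphical representation ($\xi_t^{A\cup B}=\xi_t^A\cup\xi_t^B$) gives $P(w\in\xi_t^{\TT^d_\ell})\le\sum_{v\in\TT^d_\ell}P(w\in\xi_t^{v})\le|\TT^d_\ell|\,Ce^{-\gamma t}$, and a union bound over $w$ finishes once $\kappa\gamma>2$, so the boundary bookkeeping you gesture at is not needed. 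For the lower bound in (a), a maximum over $\Theta(|\TT^d_\ell|)$ independent subtree attempts only produces a $\log|\TT^d_\ell|$ timescale if each attempt comes with a quantitative lower tail $P(\tau_{\mathrm{block}}>t)\ge ce^{-Ct}$; as written, ``fixed positive probability of lasting time $T$'' yields only survival for a fixed time, not one growing with $\ell$. Finally, as you yourself note at the end, upgrading matching two-sided inequalities to convergence to a genuine constant $c$ is the substantive content of \cite{CMMV}; the subadditivity argument on a tree without translation invariance is where the real work lies, and your sketch identifies the obstacle but does not attempt to overcome it.
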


When a tree is truncated at a finite distance, a positive fraction of the sites are on the boundary.
A more natural finite version of a tree is a random regular graph in which all vertices have degree $d+1$.
In this case there is no boundary and the graph has the same distribution viewed from any point. If there are $n$ vertices,  
the graph looks like $\TT_d$ in neighborhoods of a point that have $\le n^{1/3}$ vertices.
Mourrat and Valesin  have shown for a random regular graph,
the time to extinction starting from all sites occupied $\tau_n$ satisfies:

\begin{theorem} \label{randreg}
\cite{MouVal} (a) For any $0< \lambda < \lambda_1(\TT^d)$ there is a $C < \infty$ so that as $n\to\infty$
$$
P( \tau_n  < C \log n ) \to 1,
$$
(b) For any $\lambda_1(\TT^d) < \lambda < \infty$ there is a $c  > 0$ so that as $n\to\infty$
$$
P( \tau_n > e^{cn} ) \to c .
$$
\end{theorem}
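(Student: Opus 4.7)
\textbf{Plan for Theorem \ref{randreg}.} Both parts rest on the fact that a random $(d+1)$-regular graph $G_n$ is \emph{locally tree-like}: with probability tending to $1$, the ball $B_r(x)$ of radius $r$ around any vertex $x$ is isomorphic to the rooted truncated tree $\TT^d_r$ provided $r \le (1/3)\log_{d} n$. This lets us couple the contact process on $G_n$ with the contact process on $\TT^d$ as long as the infection stays inside such a ball.

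\textbf{Part (a), subcritical regime.} Fix $\lambda < \lambda_1(\TT^d)$. The plan is to show that the process started from a single site $x$ dies out in time $O(\log n)$ with probability $1-o(1/n)$, and then to union-bound over the $n$ possible starting sites. By the definition of $\lambda_1$, the contact process on $\TT^d$ started from the root becomes empty almost surely; standard subadditive / branching comparisons give exponential tails
\[
P_0^{\TT^d}(\xi_t \neq \emptyset) \le K e^{-\beta t},\qquad \beta=\beta(\lambda)>0.
\]
Choose $r = \lfloor (1/3)\log_d n\rfloor$ and $C$ so large that $K e^{-\beta C\log n} = o(1/n)$. With probability $1-o(1/n)$, the tree process dies before time $C\log n$ without ever exiting $B_r(x)$; on this event the graph process started at $x$ dies too by the coupling. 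A union bound over $x\in G_n$ then yields $P(\tau_n > C\log n)\to 0$.

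\textbf{Part (b), supercritical regime, and main obstacle.} Fix $\lambda > \lambda_1(\TT^d)$, so the tree process survives with probability $\rho(\lambda)>0$. The proof follows the ``block construction'' paradigm of Bezuidenhout--Grimmett. First, using the local coupling and positive survival on $\TT^d$, one shows that in a tree-like neighborhood of radius $\sim \log n$ the infection grows to fill a constant fraction of that neighborhood in time $O(\log n)$ with probability bounded below. Second, one invokes the \emph{expansion} of $G_n$ (its nontrivial adjacency eigenvalues are bounded away from $d+1$ with high probability) together with the block construction to show that, once the infection occupies a positive density of vertices, the probability that it collapses in one unit of time is at most $e^{-c'n}$, since killing it would require simultaneous failures at order-$n$ sites. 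Iterating this renewal estimate gives $\tau_n > e^{cn}$ with the stated probability. The genuinely delicate step is this last one: the local tree coupling controls only balls of at most $n^{1/3}$ vertices, so it says nothing directly about an infection of size $\Theta(n)$. Bridging ``positive-probability survival in a tree-like neighborhood'' to ``global $e^{-cn}$-stability of a density-$\alpha$ infection'' genuinely requires the global (expander) structure of $G_n$ rather than just the tree approximation, and this is the heart of the Mourrat--Valesin argument.
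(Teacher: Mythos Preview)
This theorem is not proved in the paper. It is stated in Section~1.3 as a quoted result from Mourrat and Valesin \cite{MouVal}, alongside Theorem~\ref{fintree} of \cite{CMMV}, purely to set context for the discussion of finite random graphs; the paper's own proofs (Sections~2--4) concern stars, Galton--Watson trees, and configuration-model asymptotics, and never return to random regular graphs. So there is no ``paper's own proof'' to compare your proposal against.

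That said, as a sketch of the Mourrat--Valesin argument your outline is broadly on target, but part~(a) has a gap worth flagging. You fix $r=\lfloor(1/3)\log_d n\rfloor$ and then choose $C$ large so that $Ke^{-\beta C\log n}=o(1/n)$; but for such large $C$ the infection, which can spread at linear speed on the tree, may well exit $B_r(x)$ before time $C\log n$ even when it is destined to die. The local coupling only works while the infection stays inside the tree-like ball, so you need a joint bound on survival time \emph{and} spatial extent (e.g.\ that the total progeny of the subcritical tree process exceeds $n^{1/3}$ with probability $o(1/n)$), not merely the exponential-tail survival bound. In part~(b) you correctly identify the crux: the tree coupling controls only $o(n)$ vertices, so passing from positive-probability local survival to an $e^{-cn}$-stable density genuinely requires the expander structure of $G_n$, which is indeed where the real work in \cite{MouVal} lies.
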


\mn
Notice that the threshold in the second result comes at $\lambda_1$, while the one in Stacey's result comes at $\lambda_2$.
The difference is that when $\lambda \in (\lambda_1,\lambda_2)$ on the infinite tree the origin is in the middle of linearly growing vacant 
region. On the truncated tree the system dies out when the vacant region is large enough. However, on the random regular graph the 
occupied sites will later return to the origin. Durrett and Jung \cite{DJung} investigated the qualitative differences
between $\lambda \in (\lambda_1,\lambda_2)$ and $\lambda > \lambda_2$ on the small world graph.

To construct a random graph $G_n$ on the
vertex set $\{1, 2, \ldots, n\}$ having a specified degree distribution, we use the {\it configuration model}.
Let $d_1, \ldots, d_n$ be independent and have the distribution $P(d_i=k)=p_k$. 
In order to have a valid degree sequence, we condition
on the event $E_n=\{d_1+\cdots+d_n \text{ is even}\}$.  
Since $P(E_n)\rightarrow 1/2$ as $n \to\infty$,
the conditioning will have a little effect on the distribution of
$d_i$'s. Having chosen the degree sequence $(d_1, d_2, \ldots, d_n)$,
we attach $d_i$ half-edges to the vertex $i$, and then pair
these half-edges at random. This procedure may produce a graph with
self-loops or parallel edges, but we will ignore that problem for the moment. 

In the early 2000's physicists studied the contact process on a random graphs with a power-law degree distribution, i.e., the degree of each vertex is $k$ with probability 
$$
p_k \sim Ck^{-\alpha}\quad\hbox{as $k\to\infty$}.
$$ 
Pastor-Satorras and Vespignani \cite{PSV1,PSV2,PSV3} have made an
extensive study of this model using mean-field methods. Their nonrigorous
computations suggest the following conjectures about $\lambda_c$, the
threshold for ``prolonged persistence" of the contact process, and
the critical exponent $\beta$, that controls the rate at which the equilibrium density of occupied
sites $\rho(\lambda)$ goes to 0, i.e., $\rho(\lambda) \sim C (\lambda-\lambda_c)^{\beta}$.

 \begin{itemize}

 \item If $\alpha \le 3$, then $\lambda_c = 0$. If $\alpha<3$ then $\beta = 1/(3-\alpha)$.

 \item If $3 < \alpha \le 4$, then $\lambda_c > 0$ and $\beta= 1/(\alpha-3) >1$.

 \item If $\alpha > 4$, then $\lambda_c>0$ and $\beta=1$.

 \end{itemize}

\noindent
See also Section V of \cite{PSetc}. The values of $\beta$ quoted above are given in formula (29) of \cite{PSetc}.

Chatterjee and Durrett \cite{ChaDur} showed in 2009 that $\lambda_c>0$ is
not correct when $\alpha>3$ and $P( d_i \le 2 ) = 0$.
The last condition guarantees that the graph is connected and that random walks
on the graph have good mixing properties. They only proved survival for time $\exp(O(n^{1-\ep}))$ but they
obtained bounds on the critical exponent $\beta$.

In 2013 Mountford, Mourrat, Valesin, and Yao \cite{Mbeta} extended the results of \cite{ChaDur}
to include $2 < \alpha \le 3$ and proved upper and lower bounds that had the same dependence on
$\lambda$ but different constants, showing that
$$
\rho(\lambda) \sim
\begin{cases} \lambda^{1/(3-\alpha)} & 2 < a \le 5/2 \\
\lambda^{2\alpha-3} \log^{2-\alpha}(1/\lambda) & 5/2 < \alpha \le 3 \\
\lambda^{2\alpha-3} \log^{4-2\alpha}(1/\lambda) & 3 < \alpha
\end{cases}
$$
The result for $2< \alpha \le 5/2$ agrees with the mean-field 
calculations quoted above but that formula is claimed to hold for $2 < \alpha < 3$.
Figure 2 gives a visual comparison of the mean-field and rigorous resultls for critical exponents.
For more about why the change occurs at 5/2 see the next section and \cite{Mbeta}.
Three years later, Mountford, Mourrat, Valesin, and Yao \cite{Mexp} showed that
 for all $\lambda>0$, there is a $c(\lambda)>0$ so that the survival time $\ge e^{cn}$ with high probability.

\begin{figure}[tbp] 
  \centering
  \includegraphics[bb=53 58 738 555,width=4in,height=2.9in,keepaspectratio]{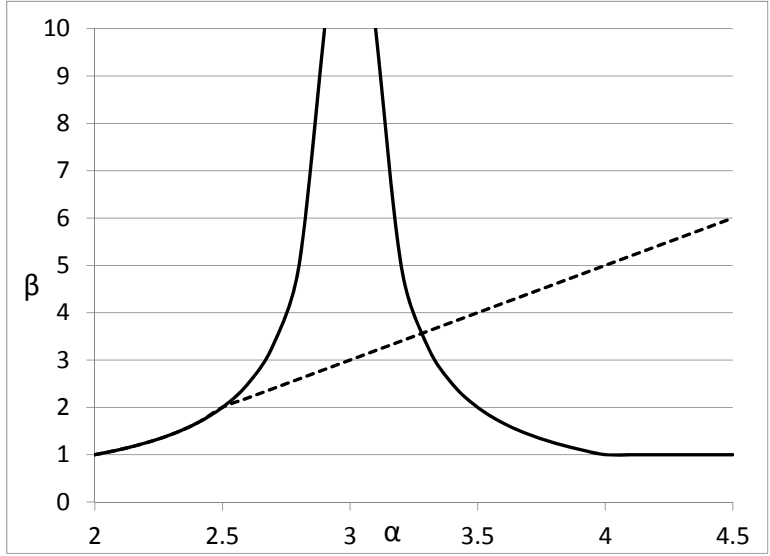}
  \caption{Mean field critical exponents (solid line) versus rigorous results (dashed line) as $\alpha$ varies from 2 to 4.5.}
  \label{fig:crexp}
\end{figure}

\subsection{Critical value asymptotics when $\lambda_c = 0$}

While the results cited above show that the mean-field calculations are not correct, physicists have never said they were wrong. Indeed, a 2010 paper Castellano and Pastor-Satorras \cite{CPS} claims they knew the right answer all along. ``Already in 2003, Wang et al \cite{Wangetal} argued that the SIS epidemic
threshold on any graph is set by the largest eigenvalue of the adjacency matrix, $\Lambda$
\beq
\lambda_c(n) = 1/\Lambda(n).''
\label{Wangcr}
\eeq
Two years earlier Pemantle snd Stacey \cite{PemSta} proved that $1/\Lambda(n)$ is the critical value 
of branching random walk on the graph. To be precise they showed in Lemma 3.1 that

\mn
{\bf Theorem.} {\it Let $G = (V,E)$ be a graph and let $M(v,2n)$ be the number of paths with $2n$ steps that begin and end at $v$. Let 
$$
M=\lim_{n\to\infty} M(v,2n)^{1/2n}=\sup_n  M(v,2n)^{1/2n}.
$$
The limit exists by supermultipicativity and is independent of $v$. The critical probability for local survival of
the branching random walk is given by $1/M$.}

\mn
However, it is far from obvious why this should also be the critical value for the contact process. For example on $\ZZ$, the critical value $\lambda$ for branching random walk is 1/2 while for the contact process $\lambda_c \approx 0.82$. 

The first question that needs to be addressed before \eqref{Wangcr} can become a theorem is the definition of $\lambda_c$. According to page 942 of the 2015 survey paper in {\it Reviews of Modern Physics} \cite{PSetc} ``Above the epidemic threshold, the activity must be {\it endemic}, so that the average time to absorption is $O(e^{cn})$.'' To make it clear that they wanted to insist on this standard we note that the discussion continued with 

\begin{quote}
``Chatterjee and Durrett proved that in graphs with power law degree distribution $E{T} > \exp(O(  N^{1-\delta}))$ for any $\delta>0$. This result pointed to a vanishing threshold but still left the possibility for nonendemic long-lived metastable states.''
\end{quote}   

\noindent
Survival for time $e^{cn}$ is certainly the gold standard for prolonged persistence, but following the footsteps of Ganesh, Masoulie, and Towsley \cite{GMT}, we will accept survival for time $\exp(O(n^\ep))$ for some $\ep>0$ as evidence that $\lambda> \lambda_c$.

The proofs of \eqref{Wangcr} in \cite{Wangetal} and \cite{Chak} do not provide a lower bound on survival time. They let $n\to\infty$ to obtain a nonlinear dynamical system (NLDS). To explain, note that if we let $p_{i,t}$ be the probability $i$ is infected at time $t$ and let $\zeta_{j,t}$ be the probability $j$ does not receive infection at time $t$  then
\begin{align*}
\zeta_{i,t} & = \prod_{j : j \sim i} ( 1- \beta p_{j,t-1}) \\
1 - p_{i,t} & = (1-p_{i,t-1})\zeta_{i,t} + \delta p_{i,t-1} \zeta_{i,t}
\end{align*}
Then they argue that if $\lambda > \Lambda^{-1}$ then one of the eigenvalues of the linearization of the NLDS around 0 is $> 1$, see the Appendix of \cite{Chak}. It is not clear what the last conclusion implies in terms of persistence. Wang et al \cite{Wangetal} use \eqref{Wangcr} to conclude that the critical value for the contact process on a star graph with $n$ leaves is $1/\sqrt{n}$. 

The results discussed in Section 1.1 show that the survival time on the star graph increase dramatically when $\lambda$ changes from $O(1/\sqrt{n})$ to $\gg 1/\sqrt{n}$. However, the claim that critical value on a star graph is $1/\Lambda(n)$ is contradicted by \eqref{survub} which shows that if $\lambda = \alpha/\sqrt{n}$ then for large $n$
$$
E_{K,1} T_{0,0} \le e^{2\alpha^2} \log n
$$ 
where $K = \lambda n/(\lambda + 1)$. It is not hard to show that the time needed to go from $n$ to $K$ is $O(\log n)$. Thus the survival time is $O(\log n)$ which is much smaller than the $O(e^{cn})$ that  \cite{PSetc} demands. Since the results in Section 1.1 show that the survival time is $\exp(O(\lambda^2 n))$, we would have to take $\lambda >0$ independent of $n$ for the contact process on the start to survive for this long.

Returning to the implications of \eqref{Wangcr} for the contact process, the maximum eigenvalue of the adjacency matrix of a random graph is trivially $\ge d_{max}^{1/2}$ (generated by paths going back and forth between a vertex with degree $d_{max}$ and its neighbors). Using results of Chung, Lu, and Vu \cite{CLV} for the maximum eigenvalue for random graphs the authors of \cite{CPS} concluded that the critical value for power law random graphs satisfies
$$
\lambda_c \sim \begin{cases} \langle d \rangle / \langle d^2 \rangle  & 2 < \alpha < 5/2 \\
1/\sqrt{d_{max}} & 5/2 < \alpha 
\end{cases}
$$
where $d_{max}$ is the maximum degree in the graph, and $\langle d \rangle$, $\langle d^2 \rangle$ are the average values of $d(x)$ and $d(x)^2$ for the graph. 
More concretely 
$$
\lambda_c(n) \sim \begin{cases} n^{(\alpha-3)/(\alpha-1)}  & 2 < \alpha < 5/2 \\
n^{-1/2(\alpha-1)} & 5/2 < \alpha 
\end{cases}
$$

Using our results we can prove an upper bound on $\lambda_c$ that supports this prediction when $\alpha>3$. Here $a=\alpha-1$.

\begin{theorem} \label{powerasy}
Suppose that the degree distribution has 
$$
P( d(x) \ge k ) = 3^ak^{-a}\quad\hbox{for $k \ge 3$}. 
$$
We assume $a > 2$ so that $Ed(x)^2 < \infty$. Let $\lambda = n^{-(1-2\eta)/2a}$ and $\eta>0$. If we start from all 1's then there is an $\ep>0$ so that the system survives for time $\exp(O(n^{\ep}))$ with high probability.
\end{theorem}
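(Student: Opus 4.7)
The plan is to pick a threshold degree $k^\ast$ so that every sufficiently high-degree vertex, viewed as the center of a star sitting inside $G_n$, behaves as a long-lived contact process by Lemma 2.4, and then union-bound over these ``supervertices.'' Concretely, I choose a small $\gamma \in (0, 2\eta/a)$ (say $\gamma = \eta/a$) and set $k^\ast = \lceil 3\, n^{(1-2\eta)/a + \gamma}\rceil$, which makes $\lambda^2 k^\ast = n^{\gamma}$. The tail hypothesis gives $P(d(x)\ge k^\ast) = 3^a(k^\ast)^{-a}$, so the expected size of $H := \{v : d(v)\ge k^\ast\}$ is of order $n^{2\eta - a\gamma}$, and a Bernstein estimate on the i.i.d.\ degrees yields $|H|\ge \tfrac12 n^{2\eta-a\gamma}$ with high probability. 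The standard configuration-model corrections (parity conditioning on $E_n$, and possible self-loops or multi-edges) cost only constant probability factors, since $a > 2$ gives $E d(x)^2 < \infty$.

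Second, for each $v\in H$ I will define the star-restricted process $\xi_t^v$ obtained from the Harris graphical construction on $G_n$ by erasing every birth and recovery symbol that does not involve $v$ or a neighbor of $v$. Then $\xi_t^v$ is just the contact process on the star at $v$, and the usual coupling gives $\xi_t \supseteq \xi_t^v$ for every $t$. Starting from all $1$'s, $\xi_0^v$ has the center and all of the $\ge k^\ast$ leaves of $v$ infected, so with $L = \lambda d(v)/(1+2\lambda)$ and any fixed $\ep_0\in (0,1)$ the bound \eqref{L24} yields
\[
P\!\left(\inf_{t\le S_v} |\xi_t^v|\le \ep_0 L\right) \le (3+\lambda)(1+\lambda/2)^{-\ep_0 L} \le \exp(-c\, n^{\gamma}),
\]
with $S_v \ge \exp(c' n^{\gamma})$ for some constants $c, c' > 0$. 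The hypotheses $\lambda \le 1$ and $\lambda^2 d(v)$ large are automatic for $n$ large, since $\lambda = n^{-(1-2\eta)/2a} \to 0$ and $\lambda^2 k^\ast = n^\gamma \to \infty$.

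Third, I will conclude by union bounding: the total failure probability is at most $|H|\exp(-c n^\gamma) \le n\exp(-c n^\gamma) = o(1)$, and on the good event every $v\in H$ contributes $|\xi_t^v| \ge \ep_0 L \ge 1$ for all $t \le \exp(c' n^\gamma)$. Monotonicity then forces $\xi_t\ne \emptyset$ on the same window, which is precisely survival for time $\exp(O(n^\ep))$ with $\ep = \gamma > 0$ (concretely $\ep = \eta/a$ suffices).

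The one bookkeeping point that has to be checked carefully, rather than a genuine obstacle, is that Lemma 2.4 is a statement about a deterministic star, whereas here the star at $v$ is embedded in a random graph whose other structure should be irrelevant---this is exactly what the monotonicity/restriction construction above accomplishes. Crucially, because we start from all $1$'s, we never need to argue that infection propagates \emph{between} supervertices; proving survival for time $e^{cn}$ instead of $\exp(n^\ep)$ would require precisely that spreading-between-stars argument, and \emph{that} would be the real obstacle to the stronger endemic statement.
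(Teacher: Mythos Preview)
Your argument is correct in spirit and takes a genuinely simpler route than the paper. The paper's proof in Section~4.1 sets $k=n^{(1-\eta)/a}$, collects $\ge n^\eta$ stars, and then builds a \emph{push-and-random-walk} machine: it uses the diameter bound together with Lemma~\ref{transfer} to show that a lit star can re-light any other star within time $2r\kappa$ with high probability (see \eqref{xfer}), and then compares the number of lit stars to a biased random walk on $\{0,\dots,M\}$ to extract survival for time $\exp(O(n^{\eta}))$. Your observation is that none of this transfer machinery is needed for the theorem as stated: since we start from all $1$'s, every high-degree vertex is already a fully lit star at time $0$, and by monotonicity the full process dominates the contact process restricted to that single star subgraph. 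One application of the star-survival estimate then gives survival for time $\exp(O(n^{\gamma}))$ directly. The paper's extra work buys a larger exponent ($\eta$ versus your $\gamma=\eta/a$) and would be essential if one started from a single infected vertex or aimed for the endemic $e^{cn}$ bound --- exactly the distinction you flag in your last paragraph.

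One technical correction is needed. You invoke \eqref{L24} (Lemma~\ref{life}), but that lemma is proved for fixed $\lambda$ and inherits from Lemma~\ref{super} an implicit ``$k$ large enough'' that scales like $k\gtrsim \lambda^{-3}$ (the supermartingale inequality there is $-\delta k + 1 \le 0$ with $\delta\sim\lambda^3/4$). In your regime $\lambda^3 k^\ast \asymp n^{(4\eta-1)/2a}$, which tends to $0$ whenever $\eta<1/4$, so Lemma~\ref{life} does not literally apply for small $\eta$. The paper handles precisely this at the start of Section~4 by redefining $p=(1-\ep)\lambda/(1+\lambda)$ and proving Lemma~\ref{life2}, whose only requirement is that $\lambda$ be small and $\lambda^2 k\to\infty$ --- exactly your $\lambda^2 k^\ast = n^\gamma\to\infty$. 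Swap Lemma~\ref{life2} in for \eqref{L24} and your argument goes through verbatim.
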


\noindent
Combining this result with the fact that $1/\Lambda$ gives the critical value for branching random walk and hence a lower bound on the critical value for the contact process we have 
\beq
\lambda_c(n) = n^{-(1+o(1))/2a}.
\label{powerlamc}
\eeq

Next we consider the stretched exponential 
$$
P( d(x) \ge k ) = \exp(-x^{1/b} + 3^{1/b})\quad\hbox{ for $k \ge 3$}. 
$$
where $b>1$. In this case, the maximum degree vertex on a graph with $n$ vertices  is $\sim \log^b n$, so the maximum eigenvalue $\Lambda \sim \log^{b/2} n$ and the formula in \eqref{Wangcr} predicts that $\lambda_c \approx \log^{-b/2} n$ but results of \cite{Slyetal} show that this cannot be correct for $b \le 1$. In that case the moment generating function of the degree distribution is finite for some positive $\theta$ so $\lambda_c(n)$ converges to a positive limit.

\begin{theorem} \label{seasy}
Suppose $\lambda_n = \log^{(1-\eta)(1-b)/2} n$. If we start from all 1's then for any $\ep>0$ the system survives for time $\exp(O(n^{1-\ep}))$ with high probability.
\end{theorem}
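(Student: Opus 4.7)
The plan is to combine the sharp single-star survival bound \eqref{L24} with a renormalization over the polynomially many large-degree ``hubs'' of the configuration-model graph. Fix a small $\delta \in (0,1)$ (eventually taken $=\ep$) and call a vertex a hub if its degree is at least $d^* = (\delta \log n)^b$. By the tail assumption,
$$
P(d(x) \ge d^*) = \exp(-\delta \log n + 3^{1/b}) = \Theta(n^{-\delta}),
$$
so a simple concentration argument gives that the set $H$ of hubs satisfies $|H| \ge n^{1-\delta}/2$ with high probability.

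First I would nail down per-hub persistence. With $\lambda_n^2 = \log^{-(1-\eta)(b-1)} n$ one has $\lambda_n^2 d^* = \delta^b \log^{1+\eta(b-1)} n$. Applying Lemma \ref{ignite} followed by \eqref{L24}, starting from only the center of a hub infected the star reaches the ignited level $L = \lambda_n d^*/(1+2\lambda_n)$ with probability $1 - 3(d^*)^{-1/3}$, and thereafter retains at least $\ep L$ infected leaves for time
$$
T_1 = \exp\!\bigl(c_1 \delta^b \log^{1+\eta(b-1)} n\bigr).
$$
Note $T_1$ is superpolynomial but much smaller than $\exp(n^{1-\ep})$, so per-hub persistence alone does not suffice.

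The heart of the argument is therefore an inter-hub transmission estimate. During the interval $[0,T_1]$ an ignited hub $u$ carries $\Theta(\lambda_n d^*)$ infected leaves at every time. Each such leaf, through the locally tree-like structure of the configuration model, has a positive chance of igniting another hub $v$ that lies within graph distance $O(1)$ through moderate-degree vertices: a single transmission attempt along such a short path succeeds with polylog probability in $n$, and the exponentially many independent attempts available during $T_1$ compound to a success probability $1-o(1)$. Combined with the ignition estimate, this shows that each ignited hub ignites on average $\mu > 1$ previously uninfected hubs before its own extinction. The hub-level process thus dominates a supercritical branching random walk on $H$, and a standard block argument shows that such a process on $|H| \ge n^{1-\delta}/2$ sites survives for time $\exp(c n^{1-\delta})$ with high probability. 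Setting $\delta = \ep$ completes the proof.

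The main obstacle is the inter-hub transmission step. Unlike in Theorem \ref{powerasy}, where $\lambda$ is polynomially small and a single ``giant hub'' is polynomially large, here $\lambda_n$ is only polylog-small and each hub is only polylog-large, so chaining across many hubs is unavoidable. Making this precise requires a joint analysis of the configuration-model neighborhoods of many hubs, quantifying the number of bounded-length low-degree paths connecting hub pairs while avoiding the usual conditioning pitfalls of repeatedly exposing half-edges. I would adapt the block-construction machinery of \cite{ChaDur} and \cite{Mexp} to the stretched-exponential regime, using Lemma \ref{ignite} to handle the initial spread on each newly ignited hub.
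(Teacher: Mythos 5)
Your high-level architecture -- polylog-degree hubs, ignition, long single-hub persistence, then a hub-level supercritical process -- mirrors the paper's, and the bookkeeping $\lambda_n^2 d^* = \Theta((\log n)^{1+\eta(b-1)})$, $|H| \ge n^{1-\delta}/2$ is exactly right. However, the inter-hub transmission step, which you correctly identify as the crux, contains a concrete error and a misreading of where the difficulty lies.

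First, the claim that an ignited hub can reach another hub ``within graph distance $O(1)$'' is false. With $|H| = \Theta(n^{1-\delta})$ hubs among $n$ vertices and a locally tree-like graph in which $r$-balls have size $\approx \nu^r$, the nearest other hub is at distance $\Theta(\log n)$, not $O(1)$; so a single path attempt succeeds with probability $\approx (\lambda_n/(\lambda_n+1))^{\Theta(\log n)} = \exp(-\Theta(\log n \cdot \log\log n))$, which is quasi-polynomially small in $n$, not inverse-polylogarithmic. The argument nevertheless survives this correction: the number of independent attempts available during the hub lifetime is $\approx T_1/\log n = \exp(\Theta((\log n)^{1+\eta(b-1)}))$, and since $(\log n)^{\eta(b-1)} \gg \log\log n$ the failure probability is doubly exponentially small. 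You need to actually carry out this comparison rather than assert ``polylog probability'' from an $O(1)$-distance claim that does not hold.

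Second, you have the comparison with the power-law case backwards, and this leads you to invoke machinery that is not needed. In the power-law regime $\lambda$ is polynomially small, so $\log(2/\lambda)=\Theta(\log n)$ and the number of direct-path attempts one must budget is $\exp(\Theta(\log^2 n))$; in the stretched-exponential regime $\lambda$ is only polylog-small, so $\log(2/\lambda)=\Theta(\log\log n)$ and the analogous budget is only $\exp(\Theta(\log n\cdot\log\log n))$, which is \emph{smaller}. The push step is therefore \emph{more} favorable here, not less, and no ``chaining across many hubs'' or adaptation of the block constructions of \cite{ChaDur} and \cite{Mexp} is required. The paper simply tries the direct path of length $\le 2\nu\log n$ (the graph diameter) $\kappa = n^{3\nu\log(2/\lambda)}$ times, using \eqref{xfer}, and then compares the number of lit hubs to a one-dimensional random walk with upward bias. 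Finally, a small but necessary point: since $\lambda_n\to 0$, you must use the $\lambda\to 0$ versions of the star estimates (Lemmas \ref{super2}, \ref{life2}, \ref{ignite2}) rather than the fixed-$\lambda$ Lemmas \ref{life} and \ref{ignite}, which carry hidden constants that depend on $\lambda$ being bounded away from zero.
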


\noindent
We believe that the last result gives the right answer.

\begin{conj} 
Suppose $\lambda_n = \log^{-a/2} n$ where $a > b-1$. If we start from all 1's then for any $\ep>0$ the system dies out by time  $\exp(O(n^{\ep}))$ with high probability.
\end{conj}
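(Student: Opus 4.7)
The plan is to combine the sharp upper bound on single-star survival from Lemma 4 of \cite{pertree} with a subcritical branching-process comparison for the propagation of infection between high-degree hubs in the configuration model.

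First I would observe that the maximum degree satisfies $D_{\max} = (1+o(1))\log^b n$ with high probability, from the tail $P(d(v)\ge k)=\exp(-k^{1/b}+3^{1/b})$. Applying the sharp bound \eqref{survub}, the expected survival time on any single star of $k$ leaves is at most $(\log k)\exp((1+\ep)\lambda^2 k)$; plugging in $\lambda^2 = \log^{-a} n$ and $k \le D_{\max}$, this is $\exp((1+o(1))\log^{b-a} n)$. The hypothesis $a > b-1$ forces $b - a < 1$, so this equals $\exp(o(\log n)) = n^{o(1)}$. Hence no single star in the graph can sustain the infection for longer than $n^{o(1)}$ time units.

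Next, declare a vertex a \emph{hub} if $d(v) \ge \log^{a+\delta} n$ for a small $\delta>0$. On non-hub vertices $\lambda^2 d(v) \le \log^{-\delta} n = o(1)$, so the contact process restricted to the non-hub subgraph is genuinely subcritical; a branching-random-walk comparison in the spirit of the proof of Theorem \ref{ubglam1} shows that infection originating on a non-hub dies out in $O(\log n)$ time without igniting any hub, with very high probability. Thus for the process to live until time $\exp(n^\ep)$, at least $\exp(n^\ep)/n^{o(1)}$ hub-ignitions must occur. I would then build a branching-process comparison on the ``hub-ignition graph'' whose nodes are (hub, ignition-time) pairs and whose offspring count for a given node is the number of other hubs that this one ignites during its $n^{o(1)}$-long active window.

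The key estimate, and the main obstacle, is showing that the mean offspring $\mu^*$ of this hub-ignition process is strictly less than $1$. During its lifetime a hub fires many escape attempts into the surrounding low-degree region, and each such attempt reaches graph-distance $\ell$ with probability at most $(\lambda \bar d)^\ell$, where $\bar d$ is the typical size-biased degree along a path through non-hubs; since $\lambda \bar d = o(1)$ these paths are strongly subcritical. Combining this with the density $\exp(-\log^{(a+\delta)/b} n)$ of hubs and the tree-like geometry of the configuration model up to radius $O(\log n)$, one aims to show $\mu^* \le 1-c$ for some $c>0$. The delicate point is that successive escape attempts from the same hub are \emph{not} independent --- they emanate from a common center whose leaf-state evolves in time --- so a naive product bound overcounts. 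Adapting the coupling and martingale arguments of Bhamidi--Nam--Nguyen--Sly \cite{Slyetal}, who control precisely this kind of repeated-escape quantity in the Galton--Watson setting under a milder tail assumption, appears to be the right route; the stretched-exponential tail is mild enough that their framework should carry over, modulo technical work to handle the small probability that two hubs sit within constant graph-distance of each other, which one absorbs by truncating away the exceptional configurations.
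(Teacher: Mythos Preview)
This statement is labeled a \emph{Conjecture} in the paper, and the paper gives no proof or proof sketch for it; the authors simply write ``We believe that the last result gives the right answer'' and move on. There is therefore nothing in the paper to compare your proposal against.

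Your outline identifies the natural ingredients --- the sharp star-survival upper bound \eqref{survub} and a hub-ignition branching comparison --- and you correctly flag the central unresolved difficulty (controlling the mean offspring $\mu^*$ and the dependence between escape attempts). One step, however, is not right as written. You assert that on the non-hub subgraph ``$\lambda^2 d(v) \le \log^{-\delta} n$'' and that a branching-random-walk comparison ``in the spirit of the proof of Theorem~\ref{ubglam1}'' yields subcriticality there. First, with your definition of hub ($d(v)\ge \log^{a+\delta} n$) the bound on non-hubs is $\lambda^2 d(v) < \log^{+\delta} n$, not $\log^{-\delta} n$; presumably you intended the threshold $\log^{a-\delta} n$. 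More importantly, the comparison in Theorem~\ref{ubglam1} gives subcriticality only when $\lambda$ times the (mean) degree is below $1$, i.e.\ when $\lambda d = O(1)$, not when $\lambda^2 d = o(1)$. Here $\lambda = \log^{-a/2} n$ while non-hub degrees can still be of order $\log^{a-\delta} n$, so $\lambda d$ can be as large as $\log^{a/2-\delta} n \gg 1$, and the one-step path-counting bound does not apply. Establishing that the infection cannot percolate through the low-degree bulk, despite $\lambda d \gg 1$ there, is precisely the substance of the conjecture; it is not dispatched by Theorem~\ref{ubglam1}, and this is where the machinery of \cite{Slyetal} (or something comparable) would have to do real work, not merely at the hub-to-hub level you describe.
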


The remainder of the paper is devoted to proofs. Section 2 gives our results for the star graph. Section 3 proves our results for Galton-Watson trees. Section 4 gives the asymptotics for $\lambda_c(n)$.

\clearp

\section{Results for the star graph } \label{sec:ublam2}

Recall from (1) that we set
$$
L=pk \hspace{1ex} \text{ where } \hspace{1ex} p=\lambda/(1+2\lambda).
$$
The definition of $Y_n$ is givn right after that formula.

\begin{lemma} \label{super}
Let $e^\theta = 1/(1+\lambda/2)$. If $k$ is large enough $e^{\theta Y_n}$ is a supermartingale while $Y_n \in (0,pk)$.
\end{lemma}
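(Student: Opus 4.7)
The plan is a direct one-step computation, showing $E[e^{\theta Y_{n+1}}\mid Y_n = y]/e^{\theta y} \le 1$ for $y\in(0,pk)$ using the three transitions listed just before Lemma 2.4 is previewed. Since $\theta < 0$ (as $e^\theta=1/(1+\lambda/2)<1$), factoring $e^{\theta y}$ out of the one-step expectation reduces the supermartingale condition to
\[
pk\cdot e^{-\theta} + \lambda(1-p)k\cdot e^{\theta} + E[e^{-\theta N}] \le D = pk+\lambda(1-p)k+1.
\]

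The first step is to evaluate the moment generating function of the big downward jump $N$. Using the shifted geometric distribution $P(N=j)=(1+\lambda)^{-j}\cdot\lambda/(1+\lambda)$ and the substitution $e^{-\theta}=1+\lambda/2$, the ratio in the geometric series is $(1+\lambda/2)/(1+\lambda) < 1$, so
\[
E[e^{-\theta N}] \;=\; \frac{\lambda/(1+\lambda)}{1 - (1+\lambda/2)/(1+\lambda)} \;=\; 2.
\]
This clean value is no coincidence: the choice $\theta=-\log(1+\lambda/2)$ is tuned precisely so that the big-jump contribution becomes a bounded constant rather than a growing term in $k$.

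The second step is the algebraic simplification with $p=\lambda/(1+2\lambda)$. Expanding the two ``$\pm 1$'' terms, the required inequality becomes
\[
pk\cdot\tfrac{\lambda}{2} \;+\; \lambda(1-p)k\cdot\frac{-\lambda/2}{1+\lambda/2} \;+\; 1 \;\le\; 0.
\]
Plugging in $p=\lambda/(1+2\lambda)$ and $1-p=(1+\lambda)/(1+2\lambda)$, the coefficient in brackets collapses: the $\lambda/2$ drift contributions cancel at leading order and leave
\[
-\,\frac{\lambda^3 k}{2(1+2\lambda)(2+\lambda)} \;+\; 1 \;\le\; 0,
\]
which is the identity that explains the mysterious choice of $p$. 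This holds for every $k \ge 2(1+2\lambda)(2+\lambda)/\lambda^3$, which gives the ``$k$ large enough'' hypothesis.

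The main obstacle is the bookkeeping in the algebra of step two, and the only subtlety is the cap $Y_n\to\min\{Y_n+1,pk\}$; but the hypothesis $Y_n\in(0,pk)$ keeps us away from the upper boundary (or equivalently, replacing $y+1$ by $pk$ shifts the positive jump to a smaller value and, since $\theta<0$, must be handled with the slack provided by taking $k$ slightly larger). Once the bound $E[e^{-\theta N}]=2$ is obtained, the rest of the proof is the drift balance displayed above.
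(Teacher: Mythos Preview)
Your proof is correct and follows essentially the same one-step supermartingale computation as the paper: both evaluate the geometric sum for the $N$-jump (the paper writes it as $\frac{e^{-\theta}-1}{1+\lambda-e^{-\theta}}$, which equals $1$ at $e^{-\theta}=1+\lambda/2$, matching your $E[e^{-\theta N}]-1=1$) and then show the two drift terms contribute $-\delta k$. The only difference is presentational: the paper identifies $\delta>0$ qualitatively by observing that $e^{\theta}=1/(1+\lambda/2)$ lies strictly between the roots $1/(1+\lambda)$ and $1$ of the associated quadratic, whereas you carry the algebra through to the explicit value $\delta=\lambda^3/\bigl(2(1+2\lambda)(2+\lambda)\bigr)$ and hence an explicit threshold for $k$.
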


\begin{proof} We begin by noting that
\begin{align}
E(\exp(\theta Y_{n+1}) - \exp(\theta Y_n)|Y_n=y)  =e^{\theta y}( e^{\theta} - 1) \lambda (1-p)k/D &
\label{smeq}\\
 + e^{\theta y}(e^{-\theta} - 1 ) pk/D  + \frac{e^{\theta y}}{D} \left[ \sum_{j=0}^\infty \left( \frac{e^{-\theta}}{1+\lambda} \right)^j 
\left( \frac{\lambda}{1+\lambda} \right) -1 \right]& .
\nonumber
\end{align}
The term in square brackets is
$$
\frac{1}{1 - e^{-\theta}/(1+\lambda)} \cdot \frac{\lambda}{1+\lambda} - 1 
 = \frac{\lambda}{1+\lambda - e^{-\theta}} - 1 
 = \frac{e^{-\theta} - 1}{1+\lambda - e^{-\theta}} \ge 0.
$$
Note that $\theta<0$ so the last inequality implies that we must take $e^{-\theta} < 1+\lambda$.

The first two terms are 
$$\frac{e^{\theta y}k}{D} \left( (e^\theta-1)\lambda(1-p)+(e^{-\theta}-1)p\right),$$
so we begin by solving 
$$(e^\theta-1)\lambda(1-p)+(e^{-\theta}-1)p=0.$$ 

Rearranging and setting $x=e^\theta$ we want
$$
x^2\lambda(1-p) - [\lambda(1-p)+p] x + p = 0.
$$
Factoring we have
$$
(\lambda(1-p)x - p)(x-1) = 0.
$$
Since $p = \lambda/(1+2\lambda)$ the smaller root is
$$
\frac{p}{\lambda(1-p)} = \frac{\lambda/(1+2\lambda)}{\lambda (1+\lambda)/(1+2\lambda)} = \frac{1}{1+\lambda}.
$$

We let $e^\theta = 1/(1+\lambda/2) \in (1/(1+\lambda),1)$ so that there is a $\delta>0$ with
$$
e^{\theta} \lambda(1-p) +  e^{-\theta} p = [\lambda(1-p) + p] - \delta 
$$
and hence 
$$
(e^{\theta} - 1)\lambda(1-p)k +  (e^{-\theta} - 1)pk + \frac{e^{-\theta} - 1}{1+\lambda - e^{-\theta}}
= - \delta k + \frac{e^{-\theta} - 1}{1+\lambda - e^{-\theta}}.
$$
From this we see that if $k$ is large enough $e^{\theta Y_n}$ is a supermartingale while $Y_n \in (0,pk)$.
The reason we restricted $Y_n$ to $(0,pk)$ is that when $Y_n\leq pk$, the number of infected leaves tends to grow, which makes it possible to construct a supermartingale  $e^{\theta Y_n}$ with $\theta<0$. 
Note that when $Y_n$ is small the number of infected leaves may become 0 before the center is reinfected
but in this case the number of lost infections $N$ is truncated.

\end{proof}

Let $T_\ell^- = \inf\{ n : Y_n \le \ell\}$ and let $T_m^+ = \inf\{ n : Y_n \ge m \}$. We write $P_i$  for the law of the process $Y_n$ starting with $Y_0=i$.

\begin{lemma} \label{exit}
Let $a,b\in (0,L)$. If $b < a$ then
$$
P_{a}(T^-_{b} < T^+_L) \le (1+\lambda/2)^{b-a}.
$$
\end{lemma}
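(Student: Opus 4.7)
The plan is to apply the optional stopping theorem to the supermartingale $M_n = e^{\theta Y_n}$ furnished by Lemma \ref{super}, with $e^\theta = 1/(1+\lambda/2)$ (so $\theta < 0$), stopped at $\tau = T^-_b \wedge T^+_L$. Since the supermartingale property in Lemma \ref{super} is only asserted while $Y_n \in (0, pk) = (0,L)$, and the interval $(b,L)$ sits inside this range (recall $b,a \in (0,L)$), stopping at $\tau$ will stay inside the regime where $M_n$ is a supermartingale.

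First I would justify $\tau < \infty$ a.s. This is routine: the $-N$ jump occurs at positive rate $1/D$ independently of the state, $N$ is stochastically bounded below by a geometric that is positive with positive probability, and the $+1$ jumps are capped at $L$. So from any start in $(b,L)$ the chain exits the interval in finite time almost surely. Then I would apply optional stopping to the bounded-time stopped process to get $E_a[M_{n\wedge\tau}] \le M_0 = e^{\theta a}$, and pass to $n \to \infty$ using Fatou's lemma (applicable since $M_n \ge 0$) to conclude $E_a[M_\tau] \le e^{\theta a}$.

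The decisive step is that $\theta < 0$ reverses inequalities in the exponent. On the event $\{T^-_b < T^+_L\}$ we have $Y_\tau \le b$, hence $e^{\theta Y_\tau} \ge e^{\theta b}$. Therefore
\[
e^{\theta b}\, P_a(T^-_b < T^+_L) \;\le\; E_a\!\left[M_\tau\, \mathbf{1}_{\{T^-_b < T^+_L\}}\right] \;\le\; E_a[M_\tau] \;\le\; e^{\theta a},
\]
and rearranging gives $P_a(T^-_b < T^+_L) \le e^{\theta(a-b)} = (1+\lambda/2)^{-(a-b)} = (1+\lambda/2)^{b-a}$, as desired.

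The main obstacle is that the downward jump $-N$ is unbounded, so $M_\tau$ itself can be arbitrarily large (since $\theta < 0$ amplifies negative excursions of $Y_n$). This rules out the simplest bounded-stopping-time form of optional stopping. However, the one-sided direction we need is exactly what Fatou on the non-negative supermartingale $M_{n\wedge\tau}$ provides, and the event $\{T^-_b < T^+_L\}$ is handled only through the \emph{lower} bound $M_\tau \ge e^{\theta b}$, so overshoot below $b$ only helps us. Thus no delicate control of the overshoot distribution of $N$ is required, which is what makes this argument clean.
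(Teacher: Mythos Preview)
Your proof is correct and follows essentially the same approach as the paper: stop the supermartingale $e^{\theta Y_n}$ from Lemma~\ref{super} at $\tau = T^-_b \wedge T^+_L$, use that $\theta<0$ makes overshoot below $b$ only increase $e^{\theta Y_\tau}$, and drop the nonnegative contribution from $\{T^+_L < T^-_b\}$. Your version is slightly more careful in justifying the passage to the limit via Fatou on the nonnegative stopped supermartingale, but the argument is otherwise identical to the paper's.
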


\begin{proof} To estimate the hitting probability let $\phi(x) = \exp(\theta x)$ where we take $e^\theta = 1/(1+\lambda/2)$
and note that if $\tau = T^-_{b} \wedge  T^+_L$ then $\phi(Y(t\wedge \tau))$ is a supermartingale. Let $q=P_{a}(T^-_{b} < T^+_L)$. Using the optional stopping theorem we have
$$
q \phi(Y(T_b^-))+(1-q)\phi (Y(T^+_L)) \le \phi(a).$$

It is possible that $Y(T_b^-)<b$. Note that since $\theta < 0$, we have $\phi(x) \ge \phi(b)$ for $x \le b$. Hence, 
$$
q \phi(b) + (1-q) \phi(L) \le  \phi(a).
$$
Dropping the second term on the left, $q\le \phi(a)/\phi(b)= (1+\lambda/2)^{b-a}$
, which completes the proof.
\end{proof} 

\begin{lemma} \label{return}
If $R_L = \inf\{ n > T^-_{L-1} : Y_n = L \}$ and $b\in[0,L)$ then for sufficiently large $k$
$$
P_L(T_{b}^- < R_L) \le (2+\lambda)(1+\lambda/2)^{b-L}.
$$
\end{lemma}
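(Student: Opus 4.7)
The plan is to decompose the event $\{T_b^- < R_L\}$ according to where the chain lands at the first moment it drops below $L$, and then bound each piece using Lemma \ref{exit}. Starting from $L$, the ``$+1$'' move is absorbed by the reflection at $pk$ and the $N=0$ outcome of the ``$-N$'' jump is a self-loop, so the chain remains at $L$ until a step produces either a ``$-1$'' jump or an ``$-N$'' jump with $N\ge 1$. Let $J\ge 1$ denote the size of this first strict down-step, so that $Y_{T_{L-1}^-} = L-J$. By the strong Markov property,
$$P_L(T_b^- < R_L) = \sum_{j\ge 1} P_L(J=j)\, P_{L-j}(T_b^- < T_L^+).$$

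For each $j\ge 1$, Lemma \ref{exit} gives $P_{L-j}(T_b^- < T_L^+) \le (1+\lambda/2)^{b-L+j}$, and when $L-j\le b$ the right-hand side is already $\ge 1$, so this bound is uniform in $j\ge 1$ (trivially true when the chain has already undershot $b$). Pulling the factor $(1+\lambda/2)^{b-L}$ out of the sum reduces the lemma to the moment-generating-function estimate $E_L[(1+\lambda/2)^J] \le 2+\lambda$.

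To establish this, read the one-step distribution of $J$ off the table defining $Y_n$: the ``$-1$'' transition contributes weight $pk/D$ to $J=1$, while the ``$-N$'' transition contributes weight $\lambda/(D(1+\lambda)^{j+1})$ to each $J=j\ge 1$. Conditioning on $\{J\ge 1\}$ and summing the resulting geometric series (which converges since $1+\lambda/2 < 1+\lambda$) expresses $E_L[(1+\lambda/2)^J]$ as a convex combination of $1+\lambda/2$ and $2+\lambda$, hence bounded above by $2+\lambda$; the constant $2+\lambda$ is just the limiting value of the ratio as $pk\to 0$. The only mildly delicate bookkeeping step is that both the ``$-1$'' move and the $N=1$ outcome of the ``$-N$'' move land at $L-1$, so those two contributions must be combined rather than double-counted; this is where I expect the calculation to require the most care. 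The ``$k$ large enough'' hypothesis enters only implicitly, through the supermartingale property of Lemma \ref{super} that underlies Lemma \ref{exit}.
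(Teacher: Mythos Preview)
Your proposal is correct and follows essentially the same approach as the paper: decompose according to the first jump out of $L$ and bound each landing point $L-j$ via Lemma~\ref{exit}, then sum the resulting geometric series. The only difference is bookkeeping for the $N=0$ self-loop---the paper keeps it as a recursive term $\tfrac{\lambda}{(1+\lambda)(1+pk)}P_L(T_b^-<R_L)$ on the right and absorbs it for large $k$, while you condition on the first \emph{strict} down-step and read off the bound directly as a convex combination of $1+\lambda/2$ and $2+\lambda$; your organization is a bit cleaner, but the content is identical.
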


\noindent
{\bf Remark.} Here, and in later lemmas, the computation of explicit constants is somewhat annoying. However, when we consider asymptotics for critical values,  $\lambda$ will go to 0, so we will need to know how the constants depend on $\lambda$. 

\begin{proof} To compute the left-hand side we break things down according to the first jump. The definition of $R_L$ allows us to ignore the attempted upward jumps that do nothing. Recall that $L=pk$. The jump is to $L-1$ with probability $pk/(pk+1)$ and to $L-j$ with probability $ \frac{ \lambda}{(1+\lambda)^{j+1}} \cdot \frac{1}{1+pk}$. In the first case the probability of going below $b$ before returning to $L$ is 
$$
\le (1+\lambda/2)^{b-(L-1)} = (1+\lambda/2) \cdot (1+\lambda/2)^{b-L}.
$$ 
In the second case we have to sum over the possible values of $L-j$. Using Lemma \ref{exit}
\begin{align*}
 & \le (1+\lambda/2)^{b-L} \sum_{j=1}^\infty \frac{\lambda}{(1+\lambda)^{j+1}} (1+\lambda/2)^j + \frac{\lambda}{1+\lambda}P_L(T_{b}^- < R_L)  \\
& \le (1+\lambda/2)^{b-L} \frac{\lambda}{\lambda+1} \cdot \sum_{j=0}^\infty \left(\frac{1+\lambda/2}{1+\lambda}\right)^j + \frac{\lambda}{1+\lambda}P_L(T_{b}^- < R_L)\\
&= 2(1+\lambda/2)^{b-L}+ \frac{\lambda}{1+\lambda}P_L(T_{b}^- < R_L).
\end{align*}
Noting that $\max\{ 2, 1+\lambda/2\} \le 2(1+\lambda/2)-\delta$ for some small $\delta<\lambda$, we have the following relation,
$$
P_L(T_{b}^- < R_L) \leq \frac{\lambda}{(1+\lambda)(1+pk)}P_L(T_{b}^- < R_L)+(2+\lambda-\delta)(1+\lambda/2)^{b-L}.
$$
Hence for $k$ sufficiently large, we have 
$P_L(T_{b}^- < R_L) \leq (2+\lambda)(1+\lambda/2)^{b-L}.$
\end{proof}

Recall that $\xi_t$ denotes the original contact process on the star graph with $k$ leaves.
\begin{lemma}\label{life}
Let $b=\ep L$ and $S=\frac{1}{(2+\lambda)2k}(1+\lambda/2)^{L(1-2\ep)}$
$$
P_{L,1}\left( \inf_{t\le S} |\xi_t| \le b \right) \le (3+\lambda)(1+\lambda/2)^{-L\ep}.
$$
\end{lemma}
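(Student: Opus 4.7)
The plan is to combine Lemma~\ref{return} with a first-moment count of how many excursions of the reflected chain $Y_n$ away from $L$ can begin during the continuous-time interval $[0,S]$. The underlying coupling is that the embedded contact-process chain (at times when the center is infected) dominates $Y_n$, and during a compressed ``center dies and reinfects'' jump of $Y_n$ the value of $|\xi_t|$ stays in $[Y_{n+1},Y_n+1]$. Consequently, any dip of $|\xi_t|$ below $b$ forces $Y_m \le b$ at some step $m$ occurring inside an excursion of $Y_n$ away from $L$ that begins before time $S$.

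Each jump of $Y_n$ happens at continuous-time rate $D\le(2+\lambda)k$ by \eqref{Dbds}, so the expected number of $Y_n$-jumps in $[0,S]$ is at most $SD = \frac{1}{2}(1+\lambda/2)^{L(1-2\ep)}$ by the choice of $S$. Every excursion begins with a distinct $Y_n$-jump downward from $L$, so the expected number of excursions beginning in $[0,S]$ is at most the same quantity. By the strong Markov property each such excursion is a fresh copy of the walk starting at $L$, and Lemma~\ref{return} tells us it hits $b$ before returning to $L$ with probability at most $q := (2+\lambda)(1+\lambda/2)^{b-L}$. A first-moment (Markov) bound now yields
$$
P_{L,1}\!\left(\inf_{t\le S}|\xi_t|\le b\right) \;\le\; SD\cdot q \;=\; \frac{2+\lambda}{2}(1+\lambda/2)^{-\ep L} \;\le\; (3+\lambda)(1+\lambda/2)^{-\ep L},
$$
where we substituted $b=\ep L$ to collapse the exponent.

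The main subtlety I anticipate is the coupling step: one must verify that no dip of $|\xi_t|$ below $b$ can occur while $Y_n$ is ``parked'' at $L$ (immediate from domination, since then $|\xi_t|\ge L>b$) and handle the possibility that the last excursion overlaps $S$. The latter is harmless because Lemma~\ref{return}'s estimate on ``reaches $b$ before returning to $L$'' carries no time constraint, so it applies verbatim to the incomplete excursion. The comfortable gap between the computed $(2+\lambda)/2$ and the stated $(3+\lambda)$ absorbs minor edge effects such as the $+1$ in $D=pk+\lambda(1-p)k+1$ or additive constants in the excursion count. The remaining work is a routine application of the strong Markov property.
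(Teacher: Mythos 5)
Your proof is correct, but it takes a genuinely different route from the paper's. The paper fixes a target number of excursions $M=(1+\lambda/2)^{L(1-2\ep)}$, applies a union bound over those $M$ excursions (each one hitting $b$ before returning to $L$ with probability at most the bound from Lemma~\ref{return}), and then \emph{separately} uses Chebyshev's inequality on the sum of $M$ rate-one exponentials to show that $M$ jumps cannot happen in time less than $S$ except with probability $4/M\le(1+\lambda/2)^{-L\ep}$; the two error terms are then added, which is where the constant $(3+\lambda)=(2+\lambda)+1$ comes from. You instead make a single first-moment argument: the expected number of jump events in $[0,S]$ is at most $SD=\tfrac12(1+\lambda/2)^{L(1-2\ep)}$ by Poisson comparison, each excursion is launched by a distinct jump, the event $\{\tau_j\le S\}$ is $\mathcal{F}_{\tau_j}$-measurable so the strong Markov property lets you multiply by the per-excursion probability $q$ from Lemma~\ref{return}, and Markov's inequality on the count of ``bad'' excursions finishes the proof with the slightly tighter constant $(2+\lambda)/2$. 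Both proofs rest on Lemma~\ref{return} and on the same coupling between $|\xi_t|$ and $Y_n$; your version dispenses with the Chebyshev concentration step entirely. The coupling subtleties you flag (the meaning of an excursion ``beginning'' in $[0,S]$ when the triggering center-death precedes the reinfection that completes the compressed jump; the possibility of an excursion unfinished at time $S$) are real but resolvable exactly as you indicate, and the paper's own proof passes over these same points silently.
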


\noindent
We have returned to unmodified process so $(L,1)$ means $L$ leaves are infected and the center is as well. Again when we write the state as a subscript we drop the parentheses.  

\begin{proof} 
Let $M=(1+\lambda/2)^{L(1-2\ep)}$. By Lemma \ref{return} the probability that the chain fails to return $M$ times to $L$ before going below $\ep L$ is
 $$
 \le (2+\lambda)(1+\lambda/2)^{-L\ep}.
 $$
Using Chebyshev's inequality on the sum $S_M$ of $M$ exponentials with mean 1 (and hence variance 1),
 $$
 P( S_M < M/2 ) \le 4/M.
 $$
When the number of infected leaves is $\le L$ maximum jump rate is $D \le (2+\lambda)k$ so
$$
P \left( \frac{S_M}{(2+\lambda)k} \le \frac{(1+\lambda/2)^{L(1-2\ep)}}{2(2+\lambda) k }\right)
\le 4(1+\lambda/2)^{-L(1-2\ep)} \le (1+\lambda/2)^{-L\ep} .
$$
for large $L$. Adding up the error probabilities gives
$$
P_{L,1}\left( \inf_{t\le S} |\xi_t| \le b \right)\leq (3+\lambda)(1+\lambda/2)^{-L\ep}
$$
and completes the proof.
\end{proof}
 
Up to this point we have shown that if a star has $L$ infected leaves it will remain infected for a long time. To make this useful, we need estimates about what happens when the star starts with only the center infected.  Let $T_{0,0}$ be the first time the star is healthy. We use the pair $(n,i)$ to denote the state of the star graph, where $n$ is the number of infected leaves and $i$ indicates the state of the center ($i=1$ means the center is infected).

\begin{lemma} \label{ignite}
Let $\lambda>0$ be fixed and $K = \lambda k^{1/3}$.  Then for large $k$
\begin{align*}
& P_{0,1}( T^+_K > T_{0,0} )  \le  2\lambda k^{-1/3},  \\
& P_{K,1} (T_{0,0} < T^+_L)  \le  k^{-1/3},\\
& E_{0,1} (T^+_L | T^+_L < T_{0,0})  \le  2/\lambda.
\end{align*}
\end{lemma}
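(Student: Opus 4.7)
The plan is to prove the three inequalities in turn, using a short-time-window argument for part 1 and the embedded chain $Y_n$ (introduced just before Lemma \ref{super}) for parts 2 and 3.

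For part 1, I would fix a window $[0, t^*]$ with $t^* = 2K/(\lambda k) = 2 k^{-2/3}$ and show that the count of infected leaves reaches $K$ before time $t^*$. Three events can block this: (a) the center heals, probability $\le t^* = O(k^{-2/3})$; (b) some infected leaf heals -- with at most $K$ leaves infected before success, the total healing rate is $\le K$ and the expected number of healings in the window is at most $K t^* = 2 \lambda k^{-1/3}$, so by Markov this event has probability $\le 2\lambda k^{-1/3}$; and (c) fewer than $K$ leaf-infection attempts occur, which by coupling to a rate-$\lambda(k-K)$ Poisson process (valid while the count stays below $K$) and a Chernoff bound is $e^{-\Omega(K)}$. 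If none of (a)--(c) occurs, the count reaches $K$ before $t^* < T_{0,0}$. Summing, for $\lambda$ fixed and $k$ large the dominant term is (b) and the total is $\le 2\lambda k^{-1/3}$.

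For part 2, use that the count of infected leaves, sampled at times when the center is infected, stochastically dominates the modified chain $Y_n$. Hence $\{T_{0,0} < T^+_L\}$ forces the $Y$-chain started at $Y_0 = K$ to hit $0$ before $L$. Lemma \ref{exit} with $a = K$ and $b = 0$ bounds this by $(1+\lambda/2)^{-K} = (1+\lambda/2)^{-\lambda k^{1/3}}$, which for fixed $\lambda$ is exponentially small in $k^{1/3}$ and in particular $\le k^{-1/3}$ once $k$ is large enough.

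For part 3, decompose $T^+_L = T^+_K + (T^+_L - T^+_K)$. The expected value of the first piece, on the event $\{T^+_L < T_{0,0}\}$, is $O(k^{-2/3})$ by the window argument in part 1. For the second piece I use the drift of $Y_n$ in $(0, pk)$: essentially the computation at the start of the proof of Lemma \ref{super} gives $E[Y_{n+1} - Y_n \mid Y_n = y] \ge \lambda^2/((1+2\lambda)(2+\lambda)) - O(1/k)$ for $y \in (0, pk)$, so the expected number of $Y$-steps from $K$ to $L$ is $O((L-K)/\lambda^2) = O(k/\lambda)$. Each $Y$-step has continuous-time duration at most $1/D \le 1/((2+\lambda)k)$, giving total expected continuous time $O(1/\lambda)$. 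Dividing by the success probability from parts 1--2 (which is $1 - o(1)$) and tracking constants gives $2/\lambda$. The main obstacle is pinning the constant at exactly $2/\lambda$ rather than a larger multiple of $1/\lambda$: the cleanest route is to convert the drift bound into an expected continuous hitting time via a suitable Lyapunov function for $Y_n$ and optional stopping, and then handle the conditioning on $\{T^+_L < T_{0,0}\}$ by writing $E[\,\cdot\,] = E[\,\cdot\,;T^+_L < T_{0,0}] + E[\,\cdot\,;T^+_L \ge T_{0,0}]$ and bounding the second term using the small-probability estimates from parts 1 and 2.
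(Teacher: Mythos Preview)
Your Part 2 is essentially the paper's argument: both apply optional stopping to the supermartingale $(1+\lambda/2)^{-Y_n}$ (you via Lemma~\ref{exit}, the paper directly) to get $q\le (1+\lambda/2)^{-K}\le k^{-1/3}$.

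Your Part 1 takes a genuinely different route. The paper instead bounds
\[
P_{0,1}(T^+_K < T_{0,0}) \ \ge\ \prod_{j=0}^{K-1}\frac{(k-j)\lambda}{1+(k-j)\lambda+j},
\]
i.e., the probability that at each step of the embedded jump chain the next event is a new leaf infection rather than a healing, and then uses $1-\prod(1-x_j)\le\sum x_j$ to get
$P_{0,1}(T^+_K>T_{0,0})\le\sum_{j=0}^{K-1}\frac{1+j}{(k-j)\lambda}\le 2\lambda k^{-1/3}$.
Your window argument is valid, but as written the union bound yields $2\lambda k^{-1/3}+O(k^{-2/3})$, which exceeds the stated constant; you would need to shrink the window to $t^*=(1+\delta)K/(\lambda k)$ to land under $2\lambda k^{-1/3}$. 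The paper's product bound is shorter and hits the constant directly.

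For Part 3 you have the right idea (a drift argument) but the execution is more convoluted than needed and contains a slip: you write ``each $Y$-step has continuous-time duration at most $1/D \le 1/((2+\lambda)k)$'', but $D\le(2+\lambda)k$ means $1/D\ge 1/((2+\lambda)k)$, so that inequality goes the wrong way. Because you bound $D$ from above in the drift-per-step and would need to bound it from below in the time-per-step, the $D$'s do not cancel and you end up with $(1+2\lambda)/\lambda^2$ rather than $2/\lambda$. The paper sidesteps this by passing directly to the continuous-time chain $Y_t$ (with rates $pk$, $\lambda(1-p)k$, $1$): its drift is exactly $\mu=\lambda(1-p)k-pk-1/\lambda$, so $Y_t-\mu t$ is a submartingale, and optional stopping at $V_L=T_{0,0}\wedge T^+_L$ gives $E V_L\le L/\mu\to 1/\lambda$, hence $\le 2/\lambda$ for large $k$. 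This one-line argument also makes your decomposition $T^+_L=T^+_K+(T^+_L-T^+_K)$ unnecessary, and since $T^+_L=V_L$ on $\{T^+_L<T_{0,0}\}$ and $P(T^+_L<T_{0,0})\to 1$, the conditional expectation follows immediately.
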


\begin{proof} 
Clearly 
$$
P_{0,1}(T^+_K < T_{0,0}) \ge \prod_{j=0}^{K-1} \frac{(k-j)\lambda}{1+(k-j)\lambda+j}
$$
so subtracting the last inequality from $1 = \prod_{j=0}^{K-1} 1$ and using Lemma 3.4.3 from  \cite{PTE}
$$
P_{0,1}(T^+_K > T_{0,0}) \le \sum_{j=0}^{K-1} \frac{1+j}{(k-j)\lambda}  \le \frac{\lambda^2 k^{2/3}}{(k-\lambda k^{1/3}) \lambda } \le 2\lambda k^{-1/3}.
$$
For the second result we use the supermartingale $e^{\theta Y_n}$ from Lemma \ref{super}. If $q=P_{K,1}(T_{0,0} < T^+_L)$, using optional stopping theorem we have
$$
q\cdot 1 + (1-q)e^{\theta L} \le e^{\theta K}.
$$
Dropping the second term on the left,
$$q\leq e^{\theta K}=(1+\lambda/2)^{-K} \le k^{-1/3}.$$

To bound the time we return to continuous time
\begin{center}
 \begin{tabular}{lc}
 jump & at rate \\
$ Y_t \to Y_t-1$ & $pk$ \\
$ Y_t \to \min\{Y_t+1,pk\}$  & $\lambda(1-p)k$ \\
$ Y_t \to Y_t-N$ & $1$
\end{tabular}
\end{center} 
Before time $V_L = T_{0,0} \wedge T^+_L$ the drift of $Y_t$ is at least
\beq
\mu = \lambda(1-p)k -pk -1/\lambda=\lambda pk -1/\lambda
\label{Ydrift}
\eeq
so $Y_t - \mu t$ is a submartingale. Stopping this martingale at the bounded stopping time $V_L \wedge t$ 
$$
EY(V_L\wedge t) - \mu E(V_L\wedge t) \ge EY_0 \ge 0.
$$
Since $EY(V_L\wedge t) \le L$, it follows that
$$
E(V_L\wedge t)\le \frac{L}{\mu} = \frac{pk}{\lambda pk - 1/\lambda},
$$ 
where $p = \lambda/(1+2\lambda)$, so if $\lambda$ is fixed and $k$ is large
$$
E(V_L \wedge t) \le 2/\lambda,
$$
which completes the proof. 
\end{proof}

Combining Lemmas \ref{life} and \ref{ignite} gives the following. When $G$ occurs, we say the star at 0 is good.

\begin{lemma} \label{good}
Let $A_t$ denote the number of infected leaves at time $t$ and take $S$ as in Lemma \ref{life}. 
Define $G = \{ \inf_{k^{2/3} \le t\le S} |A_t| \ge \ep L \}$. If $\lambda>0$ is fixed and $k$ is large then
\beq
P_{0,1}\left( G \right) \ge 1 - (2+2\lambda) k^{-1/3}
\label{from0}
\eeq
\end{lemma}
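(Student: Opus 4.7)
The plan is to decompose $G^c$ into a short chain of failure modes along the ``ignition'' trajectory, each controlled by a lemma already established. Let $T^+_m$ denote the first time the leaf count $|A_t|$ reaches $m$, and recall $K=\lambda k^{1/3}$ from Lemma \ref{ignite}. The scheme is to move the process from $(0,1)$ up to $(K,1)$, then up to $(L,1)$, all before time $k^{2/3}$, and then invoke Lemma \ref{life} to keep it above $\ep L$ until time $S$.

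First I would apply the three bullets of Lemma \ref{ignite} together with the strong Markov property at $T^+_K$ and $T^+_L$. The first bullet gives $P_{0,1}(T^+_K > T_{0,0}) \le 2\lambda k^{-1/3}$; conditioning on $T^+_K < T_{0,0}$ and restarting at the state $(K,1)$ (the jump that brought $|A_t|$ up to $K$ required the center to be infected, so the state is genuinely $(K,1)$), the second bullet gives $P_{K,1}(T_{0,0}<T^+_L) \le k^{-1/3}$. Consequently $T^+_L<T_{0,0}$ with probability at least $1-(1+2\lambda)k^{-1/3}$. The third bullet, combined with Markov's inequality, yields
$$P_{0,1}\bigl(T^+_L > k^{2/3}\,\big|\,T^+_L < T_{0,0}\bigr) \le \frac{2}{\lambda k^{2/3}},$$
so up to an additive error $2/(\lambda k^{2/3})$ the process reaches $(L,1)$ before time $k^{2/3}$.

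Now apply the strong Markov property at $T^+_L$. Since $T^+_L$ is necessarily the time of an infection event (leaf counts increase only via births from the center), the state there is $(L,1)$, so Lemma \ref{life} gives
$$P_{L,1}\Bigl(\inf_{t \le S} |\xi_t| \le \ep L\Bigr) \le (3+\lambda)(1+\lambda/2)^{-\ep L}.$$
On the complementary event the leaf count stays above $\ep L$ throughout $[T^+_L,T^+_L+S]$; since $T^+_L \le k^{2/3}$ and, for fixed $\lambda>0$ and large $k$, $S = \frac{1}{2(2+\lambda)k}(1+\lambda/2)^{L(1-2\ep)}$ is exponentially larger than $k^{2/3}$, this window contains $[k^{2/3},S]$, so $G$ holds.

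Summing the four error probabilities,
$$P_{0,1}(G^c) \le (1+2\lambda)k^{-1/3} + \frac{2}{\lambda k^{2/3}} + (3+\lambda)(1+\lambda/2)^{-\ep L},$$
and for fixed $\lambda>0$ the last two terms are $o(k^{-1/3})$, so the total is bounded by $(2+2\lambda)k^{-1/3}$ once $k$ is large enough. There is essentially no hard step here; the only mild check is the identification of the state at $T^+_L$ as $(L,1)$ so that Lemma \ref{life} applies directly, and the verification $S \gg k^{2/3}$ so that the ignition delay is negligible compared to the guaranteed persistence window.
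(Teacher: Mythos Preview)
Your proof is correct and follows essentially the same approach as the paper's: combine the three estimates of Lemma \ref{ignite} (the first two to ensure $T^+_L<T_{0,0}$, the third via Markov's inequality to ensure $T^+_L\le k^{2/3}$), then apply the strong Markov property at $T^+_L$ and invoke Lemma \ref{life}. Your argument is in fact slightly more careful than the paper's in justifying that the state at $T^+_L$ is genuinely $(L,1)$ and that $[T^+_L,T^+_L+S]\supset[k^{2/3},S]$, but the logic and error bookkeeping are the same.
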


\begin{proof} Lemma \ref{ignite} implies
\begin{align*}
P_{0,1} (T^+_L<k^{2/3})&\geq P_{0,1}(T^+_L<k^{2/3}|T^+_L < T_{0,0})P_{0,1}(T^+_L < T_{0,0}) \\
&\geq (1-(2/\lambda)k^{-2/3})(1 - (1+2\lambda) k^{-1/3})\geq 1-(\frac{3}{2}+2\lambda)k^{-1/3}
\end{align*}
for large $k$. By the definition in \eqref{Ldef}, $L=pk$ where $p=\lambda/(1+2\lambda)$. Lemma \ref{life} tells us that
$$
P_{L,1}\left( \inf_{t\le S} |\xi_t| \le \ep L \right) \le (3+\lambda)(1+\lambda/2)^{-L\ep}.
$$
Since $\lambda$ is fixed the right-hand side is $\le k^{-1/3}/2$ for large $k$. Adding up the error probabilities completes the proof.
\end{proof}

\clearp

\section{Proofs of results for Galton-Watson trees} \label{sec:GWtrees}

In the previous section we developed estimates for the contact process on stars. The next step is to obtain estimates on the 
probability of ``pushing an infection from one star to another.'' When $\lambda>0$ is fixed we have to be careful not to lose too much. 

 \begin{lemma} \label{transfer}
 Let $v_0, v_1, \ldots v_r$ be a path in a graph and suppose that $v_0$ is infected at time 0. Then there is a $\gamma>0$ so that the probability that $v_r$ will become infected by time $2r$ is
 $$
\ge \left(\frac{\lambda}{\lambda+1} \right)^r (1 - \exp(-\gamma r)).
$$
If $\ep>0$ and we let $\hat\lambda = (1-\ep) \lambda/(\lambda+1)$ then for large $r$ this probability is 
$\ge \hat \lambda^r.$
\end{lemma}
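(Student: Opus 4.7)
The plan is to construct a lower bound by ignoring most of the dynamics and focusing on one-sided propagation along the path in the graphical representation. Place independent Poisson processes of recovery marks at rate $1$ at each $v_i$ and of arrows from $v_i$ to $v_{i+1}$ at rate $\lambda$, and suppress all other arrows and all vertices outside the path. By the standard monotone coupling for the graphical construction, any infection event I produce in this stripped-down model is also realized in the genuine contact process, so the probability I compute is a valid lower bound.

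Recursively define $T_0 = 0$ and let $T_{i+1}$ be the time of the first $v_i \to v_{i+1}$ arrow after $T_i$, provided it occurs before the first recovery mark at $v_i$ after $T_i$; otherwise declare propagation failed at step $i+1$. At each step the race between one exponential clock of rate $\lambda$ (arrow) and one of rate $1$ (recovery) is won by the arrow with probability $\lambda/(\lambda+1)$, and conditional on winning, the waiting time $T_{i+1}-T_i$ is distributed as $\mathrm{Exp}(\lambda+1)$. The $r$ races use disjoint portions of the underlying independent Poisson processes (a different recovery clock at each $v_i$, a different arrow clock on each directed edge), so the trials are jointly independent. Hence all $r$ transmissions succeed with probability exactly $(\lambda/(\lambda+1))^r$, and conditional on this event $T_r$ is distributed as $S_r$, a sum of $r$ i.i.d.\ $\mathrm{Exp}(\lambda+1)$ random variables, with mean $r/(\lambda+1) \le r/2$ (if $\lambda \ge 1$; in any case strictly less than $r$).

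A Cram\'er large-deviation bound applied to $S_r$ then gives $P(S_r > 2r) \le e^{-\gamma r}$ for some $\gamma = \gamma(\lambda) > 0$. Combining,
$$
P(v_r \text{ is infected by time } 2r) \;\ge\; \left(\frac{\lambda}{\lambda+1}\right)^{\!r}\!\bigl(1 - e^{-\gamma r}\bigr),
$$
which is the first inequality. For the second, observe that $(1-\ep)^r \to 0$ exponentially while $1 - e^{-\gamma r} \to 1$, so for all sufficiently large $r$ we have $1 - e^{-\gamma r} \ge (1-\ep)^r$, and the bound reduces to $\hat\lambda^r$ as claimed.

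The only point that needs care is the coupling step in the first paragraph: one must verify that the ``ignore everything else'' construction really is dominated by the full contact process, i.e.\ that removing arrows and vertices can only delay or suppress the infection of $v_r$. This is the standard attractiveness of the contact process in the graphical representation and causes no real trouble; the rest is the Chernoff/Cram\'er computation, which is routine for sums of i.i.d.\ exponentials.
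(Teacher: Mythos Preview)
Your proposal is correct and follows essentially the same route as the paper: each step succeeds with probability $\lambda/(\lambda+1)$, the conditional waiting time is $\mathrm{Exp}(\lambda+1)$, and a large-deviation bound on the sum of the $r$ waiting times gives the $1-e^{-\gamma r}$ factor. You are simply more explicit than the paper about the graphical-construction coupling and the independence of the $r$ trials, which the paper leaves implicit.
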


\begin{proof}

The probability that $v_{i-1}$ infects $v_{i}$ before it is cured is $\lambda/(1+\lambda)$. When this transfer of infection occurs the
amount of time is $t_i$ exponential with rate $1+\lambda$. By large deviations for the exponential distribution 
$P( t_1 + \cdots + t_r > 2r ) \le e^{-\gamma r}$ for some $\gamma > 0$. 
 \end{proof}

We say a star is \textit{nice} if starting from $L$ infected leaves, the event in Lemma \ref{life} occurs. Recall that $S=\frac{1}{(2+\lambda)2k}(1+\lambda/2)^{L(1-2\ep)}$ as in Lemma \ref{life}.

\begin{lemma} \label{infect}
Run the contact process on a graph consisting of a star of size $k$ to which there has been added a single chain 
$v_1, \ldots v_r$ of length $r$ where $v_1$ is a neighbor of $0$, the center of the star. Suppose that at time 0 there are $L$ infected leaves and the star with center 0 is nice. 

For large $r$ and $k$, the probability that $v_r$ will not be infected before time $T= m (2r+1)$ for some $m\leq S/(2r+1)$ is
$$
\le (1-\hat\lambda^r)^{m}.
$$
\end{lemma}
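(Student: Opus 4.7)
The plan is to partition the time window $[0,T]$ into $m$ disjoint sub-intervals $I_j = [(j-1)(2r+1),\, j(2r+1)]$ of length $2r+1$ and exhibit an essentially independent event $F_j$ on each $I_j$ which (i) implies $v_r$ gets infected during $I_j$ and (ii) has probability at least $\hat\lambda^r$. Once this is done, taking complements yields the required bound $(1-\hat\lambda^r)^m$.

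Within a single sub-interval $I_j$, I would split the reasoning into two phases. Since the star is nice, we have $|A_t| \ge \ep L$ throughout $[0,S] \supset I_j$. In particular, if the center $0$ happens to be uninfected at the start of $I_j$, it is reinfected at rate at least $\lambda\ep L$, so with probability $1 - e^{-\lambda \ep L}$ (which is exponentially close to $1$ in $k$) the center becomes infected within the first unit of time of $I_j$. Call $t_j \in I_j$ the first such moment. Starting from the center infected at time $t_j$, apply Lemma \ref{transfer} along the chain $0, v_1, \ldots, v_r$: the probability that $v_r$ is infected by time $t_j + 2r \le j(2r+1)$ is at least $\hat\lambda^r$ provided $r$ is large. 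Let $F_j$ be the intersection of these two events, constructed explicitly from the graphical representation, using only the Poisson cure clocks on $v_1,\ldots,v_r$ and the Poisson transmission clocks on the edges $(0,v_1),(v_1,v_2),\ldots,(v_{r-1},v_r)$ restricted to $I_j$, together with the first transmission clock from the leaves to the center that rings in $I_j$.

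Because each $F_j$ is measurable with respect to Poisson clocks restricted to disjoint time intervals (and to spatially almost disjoint locations), the events $F_1,\ldots,F_m$ are mutually independent. The one subtlety is that ``niceness'' of the star refers to the star alone, whereas in the joint process the chain can feed infection back into the center; but such additional infection only makes the first transmission along the chain more likely, so a monotone coupling shows the chance of $F_j$ in the joint dynamics dominates its value in a ``reset'' version where the chain is cleared between attempts. That version makes the $F_j$ genuinely independent Bernoulli events with success probability $\ge \hat\lambda^r$, yielding
$$
P\bigl(v_r \text{ uninfected throughout }[0,T]\bigr) \;\le\; P\Bigl(\bigcap_{j=1}^m F_j^c\Bigr) \;\le\; (1-\hat\lambda^r)^m.
$$

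The main obstacle is the bookkeeping in Step 3: setting up the coupling between the true joint process and the ``clear-the-chain-at-each-$I_j$'' process so that independence is rigorous while the monotonicity direction is the one we need. The other routine item is absorbing the negligible error from the event that the center fails to get reinfected within time $1$ on some $I_j$, which can either be swept into the constants by enlarging the sub-interval slightly, or handled by redefining $t_j$ as any time in $I_j$ when a leaf-to-center clock rings (there are $\gtrsim \ep L \lambda \cdot (2r+1)$ such opportunities, so the failure probability is negligible compared to $\hat\lambda^r$ and does not affect the bound).
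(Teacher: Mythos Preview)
Your proposal is correct and follows essentially the same approach as the paper: partition $[0,T]$ into blocks of length $2r+1$, use niceness to reinfect the center within the first unit of each block with probability $\ge 1-e^{-\lambda\ep L}$, then apply Lemma~\ref{transfer} to push along the chain, combining the two factors into $\ge \hat\lambda^r$ for large $r,k$. The paper's proof is terse and simply writes ``the desired result follows'' after establishing the per-block success probability; your explicit discussion of independence via the graphical representation and monotone coupling fills in what the paper leaves implicit (and indeed the paper's Remark immediately after the lemma relies on exactly this independence).
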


\begin{proof}
Consider a sequence of times $t_i = (2r+1)i$ for $i\ge 1$. The center $0$ may not be infected at time $t_i$ but since the number of infected neighbors is $\ge \ep L$ the center will be infected by time $t_i+1$ with probability at least $1-e^{-\lambda \ep L}$.
By Lemma \ref{transfer} the probability $v_r$ is successfully infected in $[t_i,t_{i+1})$ 
is 
$$\ge (1-e^{-\lambda \ep L})\left(\frac{\lambda}{\lambda+1} \right)^r (1 - \exp(-\gamma r))\geq \hat{\lambda}^r$$ 
for sufficiently large $r$ and $k$. The desired result follows.
\end{proof}

\mn
{\bf Remark.} Due to the way the proof is done, if we condition on 0 being good then successes on two different
chains are independent events.

\medskip
To prepare for the proof of the main results we need the next lemma, which is Lemma 2.4 from \cite{Pem92}.
Let $\varphi(x)=\sum_{n=0}^\infty p_n x^n$ be the generating function of the Galton-Watson tree. We will apply Lemma \ref{magic} to
$$
f(t) = P(0 \in \xi^0_t ) \ge p_k P( 0 \in \xi^0_t | \hbox{ 0 has at least $k$ children}).
$$

\begin{lemma} \label{magic}
Let $H$ be any nondecreasing function on the nonnegative reals with $H(x) \ge x$ when $x \in[0,x_0]$.
If $f$ satisfies $(i)$ $\inf_{0\le t \le L} f(t) > 0$ and $(ii)$ $f(t) \ge H(\inf_{0\le s \le t-L} f(s))$ for $t \ge L$ some $L>0$ then
$\liminf_{t\to\infty} f(t) > 0$.
\end{lemma}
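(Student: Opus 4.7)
The plan is to prove the stronger statement that $f(t) \ge a'$ for all $t \ge 0$, where
$$a' := \min\bigl(\inf_{0 \le t \le L} f(t),\; x_0\bigr).$$
Hypothesis (i) gives $\inf_{0\le t\le L} f(t) > 0$, and the hypothesis on $H$ is only informative when $x_0>0$, so $a'>0$. This uniform lower bound immediately yields $\liminf_{t\to\infty} f(t) \ge a' > 0$, which is the conclusion.

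I would establish the uniform bound by induction on $n\ge 1$, showing that $\inf_{0\le t\le nL} f(t) \ge a'$. The base case $n=1$ is immediate from the definition of $a'$. For the inductive step, assume $\inf_{0\le s\le nL} f(s)\ge a'$ and fix $t\in[nL,(n+1)L]$. Since $t\ge L$, hypothesis (ii) gives $f(t)\ge H\bigl(\inf_{0\le s\le t-L} f(s)\bigr)$. But $t-L\le nL$, so by the inductive hypothesis $\inf_{0\le s\le t-L} f(s)\ge a'$. Monotonicity of $H$ then yields $f(t)\ge H(a')$, and since $a'\in[0,x_0]$ the assumption $H(x)\ge x$ on that interval gives $H(a')\ge a'$. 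Combining with the inductive hypothesis on $[0,nL]$ shows $\inf_{0\le t\le (n+1)L} f(t)\ge a'$, closing the induction.

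There is no deep obstacle; the one point that actually needs care is the truncation by $x_0$ in the definition of $a'$. Without that truncation, if $\inf_{0\le t\le L} f(t)$ were to exceed $x_0$, the propagated bound would lie outside the range where $H(x)\ge x$ is guaranteed, and the inductive step could collapse in principle (since $H$ could drop below the identity there, being only nondecreasing). Truncating at $x_0$ keeps the propagated bound inside the regime where $H$ dominates the identity, and since we only need \emph{some} positive lower bound, any such positive truncated value is enough.
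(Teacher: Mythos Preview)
Your proof is correct and is in fact cleaner than the argument given in the paper. The paper proceeds \emph{backwards}: starting from an arbitrary time $t_0$, it builds a decreasing sequence $t_0 > t_1 > \cdots > t_k$ with $t_{i+1} \le t_i - L$ chosen so that $f(t_i) \ge H(f(t_{i+1})) - \epsilon 2^{-i}$ (an $\epsilon$-approximate realization of the infimum in (ii)), stops once $t_k < L$, and then splits into cases according to whether some $f(t_j)$ along the chain exceeds $x_0$. Telescoping the inequalities yields $f(t_0) \ge \min\bigl(\inf_{s \le L} f(s),\, H(x_0)\bigr) - \epsilon$.

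Your approach runs \emph{forwards} instead: you fix the single threshold $a' = \min\bigl(\inf_{0\le t\le L} f(t),\, x_0\bigr)$ up front and propagate it inductively over the intervals $[nL,(n+1)L]$. The truncation by $x_0$ is exactly the right move---it replaces the paper's case analysis by a single application of $H(a') \ge a'$, and it avoids the $\epsilon$-approximation machinery entirely. Both arguments ultimately exploit the same mechanism (monotonicity of $H$ plus $H(x)\ge x$ on $[0,x_0]$), but your formulation yields a genuine uniform lower bound $f(t)\ge a'$ for all $t\ge 0$, which is slightly stronger than the $\liminf$ statement and comes with no extraneous bookkeeping.
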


\begin{proof}
For any $t_0$ and $\ep>0$, (ii) implies that there is a decreasing sequence $t_i$ with $t_{i+1} \le t_i - L$ and $t_k < L$ for some $k$
$$
f(t_i) \ge H(f(t_{i+1})) - \ep 2^{-i}.
$$
If $f(t_i) < x_0$ for all $1\le i \le k$ then
$$
f(t_i) \ge f(t_{i+1}) - \ep 2^{-i}
$$
and summing gives $f(t_0) > f(t_k) - \ep$ which gives the desired result. Suppose now that $j$ is the smallest index with $f(t_j) > x_0$.
If $j=0$ we have $f(t_0)> x_0$. If $j=1$ we have $f(t_0) \ge H(x_0)$. If $j \ge 2$ we have
$$
f(t_0) \ge f(t_{j-1}) - \ep \ge H(x_0)-\ep
$$
so in all cases we get the desired conclusion. 
\end{proof}

\mn
{\bf Proof for $p_n = 2^{-n}$, $n \ge 1$.} Our proof follows the outline of the proof of Theorem 3.2 in \cite{Pem92}, see pages 2109--2110. We can suppose without loss of generality that the root has degree $k$. Otherwise examine the children of the root until we find one with degree $k$ and apply the argument to the children of this vertex. There are two steps in the proof.

\begin{enumerate}
  \item Push the infection to vertices at a distance $r=k$ that have degree $k$.
  \item Bring the infection back to the root at time $t$ using Lemma \ref{magic}.
\end{enumerate}  

\mn
{\bf Step 1.} 
The mean of the offspring distribution 2. Let $Z_r$ be the number of vertices
at distance $r$ from $0$ and let $v^1_r, \ldots v^J_r$ be the subset of those that have exactly $k$ children, where $J$ is a random variable that represents the number of such vertices.

Since the root has degree $k$  and $p_k = 2^{-k}$ if we set $r=k$
$$
EJ \ge k \mu^{r-1} p_k = k/2,
$$
where $\mu=2$ is the mean offspring number.
 
If we condition on the value of $W = Z_r/(k\mu^{r-1})$ and let $\bar J = (J|W)$ be the conditional distribution of $J$ given $W$
then 
$$
\bar J = \hbox{Binomial}(k2^{r-1}W,2^{-k}).
$$
Let $M$ be the random number of vertices among  $v^1_r, \ldots v^J_r$ that are infected before time 
$$
S=\frac{1}{2k(2+\lambda)}(1+\lambda/2)^{L(1-2\epsilon)}
$$ 
defined in Lemma \ref{life}. The event $G = \{ \inf_{k^{2/3} \le t\le S} |A_t| \ge \ep L \}$ in Lemma \ref{good} occurs with high probability. By Lemma \ref{infect}, conditioning on $G$ the probability a given vertex will not become infected by time $S$ is
\begin{align*}
& p_{noi} \le (1 -\hat\lambda^k)^m \quad\hbox{where}\quad \hat \lambda = (1-\ep)\frac{\lambda}{\lambda+1}  \quad\hbox{and}\\
& m = \frac{S-k^{2/3}}{2k+1} \geq \frac{  (1+\lambda/2)^{L(1-2\ep)}   }{4k(2k+1)(2+\lambda)} 
\quad\hbox{with}\quad L = \frac{\lambda k}{1+2\lambda}.
\end{align*}
Combining the definitions and using $(1-x) \le e^{-x}$ we have
$$
p_{noi} \le \exp\left( - \frac{\Gamma^k}{4k(2k+1)(2+\lambda)} \right) \quad\hbox{where}
\quad \Gamma = \hat\lambda (1+\lambda/2)^{(1-2\ep)\lambda /(1+2\lambda)}.
$$
When $\lambda = 2.5$
\beq
 \frac{\lambda}{\lambda+1} (1+\lambda/2)^{\lambda/(1+2\lambda)} = 1.0014 > 1,
\label{super}
\eeq
so $\Gamma >1$ when $\ep$ is small and $p_{noi} \to 0$ as $k\to\infty$. From this we see that if $\delta>0$ then for large $k$
$$
EM \ge (1-\delta)EJ.
$$
The remark after Lemma \ref{infect} implies that if we condition on the value of $W$ and let $\bar M = (M|W)$
then 
$$
\bar M \ge \hbox{Binomial}(k2^{r-1}W,2^{-k}(1-\delta)).
$$
To prepare for the following two generalizations of the result for Geometric(1/2) offspring distribution we ask the reader to verify that in Step 2, all we use is the
fact that \eqref{super} implies the bounds on $EM$ and $\bar M$.

\mn
{\bf Step 2.} Let $H_1(t)=P(v^i_r \in \xi_{t-S} \text{ for some $1\leq i\leq J$})$ and 
$$
H_2(t)=P(0\in \xi_t | v^i_r\in \xi_{t-S} \text{ for some $1\leq i\leq J$}),
$$
so that $f(t)\geq H_1(t)H_2(t)$. Fix $t>2S$ and let 
$$
\chi(t)= \inf\{f(s) : s \le  t - S \}.
$$
Since $t$ is fixed, we  simplify the notation and write $\chi(t)$ as $\chi$.

Ignore all but the first infection of each $v^i_r$ by its parent. any of these will evolve independently from the time $s<S$ it is first infected, and will be infected at time $t-S$ with probability at least $\chi$. Thus given $M$ the number of infected at time  $t - S$ will dominate $N = \hbox{binomial}(M,\chi)$. If we let $\bar N = \hbox{binomial}(\bar M,\chi)$ and let $\delta>0$,  then by Lemma 2.3 in \cite{Pem92} we see that there exists a $\varepsilon>0$ such that
$$
P( \bar{N} \ge 1 ) \ge (1-\delta)\chi EM \wedge \varepsilon
$$
Therefore $H_1(t)\geq  (1-\delta)\chi EM \wedge \varepsilon$ when $t>2S$.

Finally, if some $v^i_r$ is infected at time  $t - S$ then the probability of finding $0$ infected
at time $t$ is bounded below by $\rho_1\rho_2$ where

\begin{itemize}
  \item 
$\rho_1$ is the probability that the contact process starting with only $v^i_r$ infected at time 
$t - S$ infects $0$ at some time $s$ with $t - S \le s \le t$. By Lemmas \ref{ignite}, \ref{good}, and \ref{infect}, $\rho_1 \ge 1-\delta$. 

\item
$\rho_2$ is the probability $0$ is infected at time $t$ given the infection of $0$ at such a time $s$. For any $\ep>0$, by Lemma \ref{infect} the probability that $0$ have not been infected by time $S/2$ is less than $\ep$ when $k$ is sufficiently large. By Lemma \ref{good}, with probability $\geq 1-(2+2\lambda)k^{-1/3}$ there should be at least $\ep L$ infected leaves at time $t-\ep$. Hence 0 is infected at $t$ with probability at least $(1-e^{-\lambda \ep^2 L})e^{-\ep}$, where the second term guarantees that the root is infected at time $t$. Choosing $\ep$ is sufficiently small and $k$ sufficiently large gives $\rho_2\geq 1-\delta$.

\end{itemize}

\noindent
Thus
$$
f(t) \geq 
\begin{cases}
\chi(t)EM(1-\delta)^3 \wedge \varepsilon & t>2S,\\
\inf_{0\leq s \leq 2S} f(s) & S \leq t\leq 2S.\\
\end{cases}
$$

We can take $\varepsilon < \inf_{0\leq s \leq 2S} f(s)$ so that $f(t)\geq \chi(t)EM(1-\delta)^3 \wedge \varepsilon$ for all $t\geq S$. The result now follows from  Lemma \ref{magic} with $L=S$ and $H(x)=(1-\delta)^3(EM) x \wedge \varepsilon$.

\mn
{\bf Proof for $p_n = (1-p)^{n-1} p$.} It is now straightforward to replace 1/2 by $p$. We only have to pick $k$ and $r$ so that we can prove the analogue of \eqref{super}.
The mean of the offspring distribution is $1/p$. Let $Z_r$ be the number of vertices
at distance $r$ from $0$ and let $v^1_r, \ldots v^J_r$ be those that have exactly $k$ children.
Since the root has degree $k$  and $p_k = (1-p)^{k-1} p$ 
\beq\label{meanoff}
EJ \ge k (1/p)^{r-1} (1-p)^{k-1} p.
\eeq
In this case we want to pick $r$ so that $(1/p)^r(1-p)^k \approx 1$. Hence $EJ$ can be large when $k$ is large. Ignoring the fact that $r$ and $k$ must be integers this means
$$
r/k = \log(1-p)/\log p.
$$

Let $M$ be the random number of vertices among  $v^1_r, \ldots v^J_r$ that are infected before time $S$. 
By Lemma \ref{infect} the probability a given vertex will not become infected is
$$
\le (1 -\hat\lambda^r)^{\lceil S/(2r+1) \rceil} \le \exp\left(-\frac{\Gamma^k}{2k(2r+1)(2+\lambda)} \right)
$$
where $\Gamma=\hat\lambda^{r/k} (1+\lambda/2)^{(1-2\ep)\lambda /(1+2\lambda)}$.
That is, if we choose $\lambda$ such that 
\beq
\left( \frac{\lambda}{\lambda+1} \right)^{r/k} \cdot (1+\lambda/2)^{\lambda/(1+2\lambda)} > 1
\label{suff}
\eeq
then we have $\Gamma>1$ for large $k$. By the same reasoning as before this choice of $\lambda$ gives an upper bound on $\lambda_2$.

If we want to graph the bound as a function of $p$ it is better to work backwards. Given $\lambda$ the second factor is
$>1$ so we can easily find the value of $r/k$ that makes this 1. Having done this we can easily compute the value of $p$
for which $\lambda$ gives the upper bound on $\lambda_2$.

\mn
{\bf Proof for subexponential distributions.} We suppose that the mean of the offspring distribution is $\mu>1$.
If $p_k$ is subexponential, i.e.,
$$\limsup_{k\to\infty} (1/k)\log p_k=0,$$
then for any $\delta$ there is a $k$ with $p_k \ge (1-\delta)^k$. It follows from the same reasoning as in (\ref{meanoff}) that we can take $r$ such that
$$
\frac{r}{k} = - \frac{\log(1-\delta)}{\log \mu}.
$$
Given any $\lambda>0$, \eqref{suff} will hold if $\delta$ is small enough, which implies local survival of the process. Therefore $\lambda_2=0$.

\clearp 

\section{Asymptotics for $\lambda_c$}

We begin with some general computations and then consider our two examples: power laws and stretched exponential. 

\mn
{\bf Survival on star graph.} Our first step is to adapt Lemma \ref{life} to the situation in which $\lambda \to 0$. For reasons that will become
clear when we prove Lemma \ref{ignite2} we have to modify the definition of $p$:
$$
p = (1-\ep) \frac{\lambda}{1+\lambda}, \qquad L = pk, \quad\hbox{and}\quad b = \ep L.
$$
Defining $Y_n$ as before

\begin{lemma} \label{super2}
Let $\ep>0$. If $\lambda/(1+2\lambda) < \ep$ then $(1+\lambda/2)^{-Y_n}$ is a supermartingale.
\end{lemma}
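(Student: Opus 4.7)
The plan is to follow the structure of Lemma~\ref{super}: compute the one-step drift of $\phi(Y_n) := (1+\lambda/2)^{-Y_n}$ using the three jump types of $Y_n$, and show that with the new choice $p = (1-\ep)\lambda/(1+\lambda)$ the value $e^{\theta} = 1/(1+\lambda/2)$ still lies in the supermartingale window. As before, the claim is understood for $Y_n \in (0,pk)$, where the ``up'' jump is not truncated.

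First I would write, in parallel with \eqref{smeq},
\[
\frac{D}{\phi(y)}\,E\bigl[\phi(Y_{n+1})-\phi(y)\,\big|\,Y_n=y\bigr] = k\bigl[(e^\theta-1)\lambda(1-p)+(e^{-\theta}-1)p\bigr]+\bigl[Ee^{-\theta N}-1\bigr].
\]
The third summand simplifies cleanly in the new regime: since $e^{-\theta}=1+\lambda/2$ gives $1+\lambda-e^{-\theta}=\lambda/2$, one gets
\[
Ee^{-\theta N}-1 = \frac{\lambda}{1+\lambda-e^{-\theta}}-1 = \frac{\lambda}{\lambda/2}-1 = 1.
\]

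Next I would analyze the bracketed quadratic. With $\lambda(1-p) = \lambda(1+\ep\lambda)/(1+\lambda)$ and $p=(1-\ep)\lambda/(1+\lambda)$, the equation $(e^\theta-1)\lambda(1-p)+(e^{-\theta}-1)p=0$ factors (setting $x=e^\theta$) as
\[
\frac{\lambda}{1+\lambda}\,(x-1)\cdot\frac{x(1+\ep\lambda)-(1-\ep)}{x}=0,
\]
so the non-trivial root is $x_\star = (1-\ep)/(1+\ep\lambda)$. Our choice $x=1/(1+\lambda/2)$ sits strictly between $x_\star$ and $1$ (making the bracket negative) iff, after clearing denominators, $\ep(1+3\lambda/2)>\lambda/2$, i.e.\ $\ep>\lambda/(2+3\lambda)$. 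The hypothesis $\ep>\lambda/(1+2\lambda)$ is strictly stronger, since $2+3\lambda > 1+2\lambda$; hence it suffices. A direct computation from the factorization yields the explicit value
\[
(e^\theta-1)\lambda(1-p)+(e^{-\theta}-1)p = -\,\frac{\lambda^2\bigl[\ep(1+3\lambda/2)-\lambda/2\bigr]}{2(1+\lambda)(1+\lambda/2)} < 0,
\]
of order $\lambda^2$ times the positive gap $\ep-\lambda/(2+3\lambda)$.

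Combining: the full drift equals this $\Theta(\lambda^2)$ negative term multiplied by $k$, plus the constant $+1$ from the geometric crash. For $k$ large enough (exactly as in Lemma~\ref{super}) the $k$-term dominates and the drift is $\le 0$ on $(0,pk)$, so $\phi(Y_n)$ is a supermartingale there. The main obstacle is purely algebraic — checking that the clean hypothesis $\ep>\lambda/(1+2\lambda)$ still lands $e^\theta=1/(1+\lambda/2)$ on the correct side of the root $x_\star$ arising from the new $p$. Once that is verified, the rest is the same computation as before; the point of re-doing the lemma is that the new $p=(1-\ep)\lambda/(1+\lambda)$, rather than $\lambda/(1+2\lambda)$, is what will be needed in the $\lambda\to 0$ regime to prove the companion version of Lemma~\ref{ignite}.
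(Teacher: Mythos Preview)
Your proof is correct and follows essentially the same line as the paper's: both verify that, with the new $p=(1-\ep)\lambda/(1+\lambda)$, the smaller root $x_\star = p/(\lambda(1-p)) = (1-\ep)/(1+\ep\lambda)$ of the quadratic still lies below $e^\theta = 1/(1+\lambda/2)$, so the drift computation of Lemma~\ref{super} goes through unchanged for large $k$. The only cosmetic difference is that the paper compares $x_\star$ to the intermediate value $1/(1+\lambda)$ (which yields the hypothesis $\ep>\lambda/(1+2\lambda)$ exactly), whereas you compare $x_\star$ directly to $1/(1+\lambda/2)$ and obtain the slightly weaker sufficient condition $\ep>\lambda/(2+3\lambda)$, correctly observing that the stated hypothesis implies it.
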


\begin{proof} $(1-p) = (1+\lambda\ep)/(\lambda+1)$ so we have
$$
\frac{p}{\lambda(1-p)} = \frac{1-\ep}{1+\lambda\ep}.
$$
The right-hand side is $< 1/(1+\lambda)$ when 
$$
1 + \lambda - \ep - \ep\lambda < 1 + \lambda \ep,
$$ 
which holds if $\lambda/(1+2\lambda)< \ep$, so the desired result follows from the proof of Lemma \ref{super}.
\end{proof}

\begin{lemma} \label{life2}
Let $\ep>0$ be fixed  $T = \exp( (1-4\ep)\lambda^2 k / 4 )$. If $\lambda$ is small then for large $k$ 
$$
P_{L,1} \left( \inf_{t \le T} |\xi_t| \le b \right) \le 4 \exp( - (1-3\ep) \lambda^2 k / 4) ,
$$
\end{lemma}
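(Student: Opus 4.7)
The approach mirrors Lemma \ref{life} but uses the refined parameters from Lemma \ref{super2} and tracks the small-$\lambda$ asymptotics explicitly. Since $\lambda/(1+2\lambda) < \ep$ for $\lambda$ small, Lemma \ref{super2} gives that $(1+\lambda/2)^{-Y_n}$ is a supermartingale on $Y_n \in (0,L)$. Running the optional-stopping arguments of Lemmas \ref{exit} and \ref{return} verbatim with this supermartingale (only the definition of $p$ has changed), I obtain the single-excursion bound
$$
q := P_L(T^-_b < R_L) \,\le\, (2+\lambda)(1+\lambda/2)^{-(1-\ep)L}.
$$

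To extract the sharp constant, I would expand $\log(1+\lambda/2) \ge \lambda/2 - \lambda^2/8$ and use $L = (1-\ep)\lambda k/(1+\lambda)$ to get
$$
(1-\ep)L \log(1+\lambda/2) \,\ge\, \frac{(1-\ep)^2 \lambda^2 k}{2(1+\lambda)}\left(1 - \tfrac{\lambda}{4}\right) \,\ge\, (1-3\ep)\lambda^2 k/2
$$
once $\lambda$ is small relative to $\ep$ (say $\lambda \le 4\ep/5$), so $q \le 3\exp(-(1-3\ep)\lambda^2 k/2)$.

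Next I would count excursions in time $T$. From state $L$, actual downward jumps occur at rate $pk + 1 \le \lambda k + 1$ (leaf healings at rate $pk$, center healings at rate $1$); the would-be upward jumps are cosmetic and blocked at the cap. Writing $N^*$ for the number of excursion starts from $L$ during $[0,T]$, the total time spent at $L$ is at most $T$, so $E[N^*] \le (\lambda k + 1)T$. By the strong Markov property the excursions are i.i.d., so Wald's identity yields
$$
P_{L,1}\!\left(\inf_{t \le T} |\xi_t| \le b\right) \,\le\, E[N^*] \cdot q \,\le\, 3(\lambda k + 1) T \exp\!\left(-(1-3\ep)\lambda^2 k/2\right).
$$
Plugging in $T = \exp((1-4\ep)\lambda^2 k/4)$ collapses the two exponents to $-(1-2\ep)\lambda^2 k/4$, and the polynomial prefactor $3(\lambda k + 1)$ is absorbed into $\exp(\ep \lambda^2 k /4)$ once $k$ is large, yielding the claimed $4\exp(-(1-3\ep)\lambda^2 k/4)$.

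The main obstacle is the bookkeeping in the single-excursion bound: the second-order term in $\log(1+\lambda/2)$ and the $1/(1+\lambda)$ appearing in $L$ each contribute $O(\lambda)$ losses that must be absorbed into the constant $(1-3\ep)$ by choosing $\lambda$ small relative to $\ep$. The excursion-counting step is routine once one uses the \emph{expectation} bound $E[N^*] \le (\lambda k + 1)T$ via Wald rather than a high-probability bound on $N^*$ itself, since the latter would incur an additional additive error that need not fit under $4\exp(-(1-3\ep)\lambda^2 k/4)$.
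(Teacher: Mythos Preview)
Your proof is correct and follows essentially the same route as the paper's: the supermartingale from Lemma~\ref{super2}, the single-excursion bound from Lemma~\ref{return}, and the Taylor expansion $\log(1+\lambda/2)\ge \lambda/2-\lambda^2/8$ to convert powers of $1+\lambda/2$ into the form $\exp(c\lambda^2 k)$. The one methodological difference is in how the number of excursions within $[0,T]$ is controlled: the paper simply invokes Lemma~\ref{life} as a black box (which internally fixes $M=(1+\lambda/2)^{L(1-2\ep)}$ excursions and uses Chebyshev on their total duration), whereas you bound the expected number of excursion starts by $(\lambda k+1)T$ and union-bound --- a slightly more direct variant that trades the Chebyshev error term for a polynomial prefactor you then absorb into the exponential slack. (Minor quibble: the bound $E[N^*]\le(\lambda k+1)T$ is most cleanly justified by noting that departures from $L$ are dominated by a Poisson process of rate $pk+1$, rather than by Wald's identity per se, since $N^*$ is not a stopping time for the holding-time sequence alone.)
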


\begin{proof} It follows from Lemma \ref{life} that if $S=(1/2k(2+\lambda))(1+\lambda/2)^{L(1-2\ep)}$ then
$$
P_{L,1} \left( \inf_{t \le S} |\xi_t| \le b \right) \le (3+\lambda) (1+\lambda/2)^{-L(1-2\ep)},
$$
but now
$$
(1-\ep)L = (1-\ep)^2\lambda k/(\lambda+1) > (1-2\ep) \lambda k/(\lambda + 1).
$$
Expanding $\log(1+x) = x - x^2/2 + x^3/3 - \ldots$ and noting that if $x< 1$ then the right-hand side is an alternating series with decreasing terms
\begin{align*}
( 1 + \lambda/2 )^{-(1-2\ep)\lambda k/(1+\lambda)} & = \exp\left(  - (1-2\ep)\frac{\lambda k}{1+\lambda} \log(1+\lambda/2) \right)\\
& \le  \exp\left( - (1-2\ep)\frac{\lambda k}{1+\lambda} \left[ \frac{\lambda}{2} - \frac{\lambda^2}{8} \right] \right) \\ 
& \le \exp( -(1-3\ep)\lambda^2 k/4 ),
\end{align*}
when $\lambda$ is small. To convert the formula for $S$ we note that
\begin{align*}
(1/2k(2+\lambda))(1+\lambda/2)^{L(1-2\ep)} 
& = \frac{1}{2k(2+\lambda)} \exp\left( \frac{(1-\ep)(1-2\ep) \lambda k}{(\lambda + 1)} \cdot \log(1+\lambda/2) \right) \\
& \ge \frac{1}{6 k} \exp\left( \frac{(1-3\ep) \lambda k}{(\lambda + 1)} \cdot  \left[ \frac{\lambda}{2} - \frac{\lambda^2}{8} \right]  \right) \\
&\ge  \exp( (1-4\ep) \lambda^2 k/4),
\end{align*}
when $\lambda$ is small, which completes the proof. 
\end{proof}

\mn
{\bf Push.} Now we work with the configuration model. Let $p_k = P( d(x)=k)$ and suppose that 

\medskip
(i) $\sum_{k} k^2 p_k < \infty$,

(ii) $P(d(x)=k)=0$ for $k \le 2$. 

\mn
The first assumption implies that the size biased degree distribution $q_{j-1} = j p_j/Ed(x)$ has finite mean $\nu$. The second implies that the diameter of our graph $\sim (\log n)/\log \nu$.See Lemma 3.4.1 in \cite{RGD}. Hence Lemma \ref{transfer} implies that if $v_0, \ldots v_r$ is a path in the graph and $v_0$ is infected at time 0, then the probability $v_r$ will be infected by time $2r$ is, for large $r$, $\ge (\lambda/2)^r$. Let
$$
\kappa = n^{3\nu\log(2/\lambda)}.
$$
If $n$ is large then the distance between any two vertices is $\le 2\nu \log n$ with high probability. Thus the probability that one star can transfer its infection to another before time $2r\kappa $ is
\begin{align}
&\ge 1 - \left( 1- (\lambda/2)^{2\nu \log n} \right)^{\kappa} 
 = 1 - \left( 1- n^{-2\nu \log(2/\lambda)} \right)^{\kappa} 
\nonumber\\
& \ge 1 - \exp(  - n^{\nu \log(2/\lambda)} ) .
\label{xfer}
\end{align}

\mn
{\bf Ignition on star graph.} We have more work to do this time. 
The proof of Lemma \ref{ignite} requires that $K = \lambda k^{2/3} \to \infty$, and we need the new definition of $L$ in part (iii).

Recall that $T_m^+ = \inf\{ n : Y_n \ge m \}$.

\begin{lemma} \label{ignite2}
Let $K = \lambda k/\sqrt{\log k}$. If $\lambda \to 0$ and $\lambda^2 k \to \infty$ then for large $k$
\begin{align*}
& (i) P_{0,1}( T_K^+ > T_{0,0} )  \le  5/\sqrt{\log k},  \\
& (ii) P_{K,1} (T_{0,0} < T_L^+)  \le   \exp(- \lambda^2 k/2\sqrt{\log k} ), \\
& (iii) E_{0,1} \min\{ T_{0,0},  T_L^+ \} \le 2/\ep.
\end{align*}
\end{lemma}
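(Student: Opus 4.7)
My plan is to follow the three-part structure of Lemma \ref{ignite}, with each bound adapted to the new regime $\lambda\to 0$, $\lambda^2 k\to\infty$ and to the new parameters $p=(1-\ep)\lambda/(1+\lambda)$, $L=pk$, $K=\lambda k/\sqrt{\log k}$.

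For Part (ii) I will apply the supermartingale $(1+\lambda/2)^{-Y_n}$ of Lemma \ref{super2} together with optional stopping at $\tau=T_{0,0}\wedge T_L^+$, starting from $Y_0=K$. Dropping the term at $L$ gives $q\le(1+\lambda/2)^{-K}$, and using $\log(1+\lambda/2)\ge\lambda/2-\lambda^2/8$ (the log series is alternating with decreasing terms for $\lambda<2$) together with $K\lambda/2=\lambda^2 k/(2\sqrt{\log k})$ yields the stated exponential bound once $\lambda$ is small. For Part (iii) I will mimic Lemma \ref{ignite}(iii): the new choice of $p$ gives the clean identity $\lambda(1-p)-p=\lambda\ep$, so the continuous-time drift of $Y_t$ becomes $\mu=\lambda\ep k-1/\lambda$, which is positive and of order $\lambda\ep k$ under $\lambda^2 k\to\infty$. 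The process $Y_t-\mu t$ is then a submartingale, and stopping at $V_L=T_{0,0}\wedge T_L^+$ yields $E(V_L\wedge t)\le L/\mu=pk/(\lambda\ep k-1/\lambda)\to(1-\ep)/[\ep(1+\lambda)]$, which is at most $2/\ep$ for small $\lambda$ and large $k$.

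Part (i) is the delicate one. The direct analogue of Lemma \ref{ignite}(i) gives $\sum_{j=0}^{K-1}(j+1)/D_j\asymp K^2/(k\lambda)=\lambda k/\log k$, which is useless since $\lambda k\to\infty$. I will instead decompose the extinction event by mechanism: either (a) from $(0,1)$ the center heals before any leaf is infected, contributing $1/(k\lambda+1)=o(1/\sqrt{\log k})$, or (b) a center-healing transition fires at some state $(j,1)$ with $j\ge 1$ and the ensuing excursion through $(j,0),(j-1,0),\dots$ reaches $(0,0)$ before the center reinfects. For (b), each visit to $(j,1)$ contributes at most $1/D_j\le 2/(k\lambda)$ to the probability of a center-healing jump, and the conditional extinction probability from $(j,0)$ is at most $1$; provided the expected number of visits to each $(j,1)$ is $O(1)$ uniformly in $j\le K$, the total contribution from (b) is at most $2K/(k\lambda)=2/\sqrt{\log k}$, giving the stated $5/\sqrt{\log k}$ after absorbing constants.

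The hard part will be the uniform $O(1)$ bound on expected visits to $(j,1)$, since a center-healing event can drop the leaf count by a geometric $N$ of mean $1/\lambda$. My plan is a gambler's-ruin argument: for $j\le K\ll k\lambda$, the leaf birth-to-death rate ratio $(k-j)\lambda/j$ exceeds $\sqrt{\log k}$, so the leaf-count chain has overwhelming upward drift. To control the geometric drops I would first condition on the absence of atypically large $N$-jumps on the time scale $E V_L=O(1/\ep)$ from Part (iii) (the excluded event has cumulative probability of order $\exp(-c/\lambda)$, negligible against $1/\sqrt{\log k}$), then reduce to a pure birth-death chain on $\{0,1,\dots,K\}$ whose standard Green's-function computation gives $V_j=O(1)$ because the ratios $\rho_i\asymp i!/(k\lambda)^i$ decay superexponentially for $i\ll k\lambda$.
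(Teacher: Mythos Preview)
Your treatments of Parts (ii) and (iii) match the paper exactly: optional stopping with the supermartingale $(1+\lambda/2)^{-Y_n}$ for (ii), and the drift/submartingale argument with $\mu=\lambda(1-p)k-pk-1/\lambda$ for (iii), where the new choice $p=(1-\ep)\lambda/(1+\lambda)$ makes $\lambda(1-p)-p=\lambda\ep/(1+\lambda)$ and hence $L/\mu\to 1/\ep$.

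For Part (i), however, the paper takes a completely different and much shorter route. It simply conditions on the center staying infected throughout the short time window $t=4/\sqrt{\log k}$ (an event of probability $e^{-t}\ge 1-4/\sqrt{\log k}$). On that event the leaves evolve as independent two-state chains, and solving the ODE $p_0'(t)=\lambda-(\lambda+1)p_0(t)$ gives $p_0(t)\ge\lambda t/2=2\lambda/\sqrt{\log k}$. Hence the leaf count at time $t$ dominates a Binomial$(k,2\lambda/\sqrt{\log k})$ with mean $2K$, and one application of Chebyshev yields $P(B<K)\le 2\sqrt{\log k}/(\lambda k)\le 1/\sqrt{\log k}$. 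Summing the two error terms gives $5/\sqrt{\log k}$.

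Your Green's-function decomposition can be made to work, but the justification you sketch has a gap. The tail of the geometric drop is $P(N>m)=(1+\lambda)^{-m}\approx e^{-\lambda m}$, so an excluded event of probability $\exp(-c/\lambda)$ would require a cutoff $m\sim c/\lambda^2$, not $c/\lambda$; whether such a cutoff stays below $K=\lambda k/\sqrt{\log k}$ is not guaranteed by $\lambda^2 k\to\infty$ alone. In fact you do not need the uniform bound $E[V_j]=O(1)$ at all: a one-line drift argument on $Y_n$ (mean increment $\ge c_0>0$ while $Y_n\le K$, since $\lambda(k-K)-K-1/\lambda\sim\lambda k$ and $D_j\le 2\lambda k$) gives $\sum_{j<K}E[V_j]=E[\tau]\le K/c_0=O(K)$, and then $\sum_j E[V_j]/D_j\le (2/\lambda k)\cdot O(K)=O(1/\sqrt{\log k})$. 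So your strategy is salvageable, but the paper's ``freeze the center for a short time'' argument is both simpler and delivers the stated constant $5$ directly.
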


\begin{proof}  
Let $p_0(t)$ be the probability a leaf is infected at time $t$ when there are no infected leaves at time 0 and
the central vertex has been infected for all $s \le t$. $p_0(0)=0$ and
$$
\frac{dp_0(t)}{dt} = - p_0(t) + \lambda(1-p_0(t)) = \lambda -(\lambda+1) p_0(t). 
$$
Solving gives 
$$
p_0(t) = \frac{\lambda}{\lambda + 1} ( 1 - e^{-(\lambda +1)t} ).
$$
As $t \to 0$
$$
\frac{ 1 - e^{-(\lambda +1)t} }{ (\lambda +1)t } \to 1,
$$
so if $t$ is small $p_0(t) \ge \lambda t/2$.
 
Taking $t = 4/\sqrt{\log k}$ it follows that the number of infected leaves at time $t$ dominates $B =  \hbox{Binomial}(k,2\lambda/\sqrt{\log k})$
$$
P_{0,1}( T_K^+ < T_{0,0} ) \ge P( B > K) \exp(-4/\sqrt{\log k}).
$$
The second factor is the probability that the center stays infected until time $4/\sqrt{\log k}$, and 
$$
\exp(-4/\sqrt{\log k})\ge 1 - 4/\sqrt{\log k}. 
$$
$B$ has mean $2\lambda k/\sqrt{\log k}$ and variance $\le 2\lambda k/\sqrt{\log k}$ so Chebyshev's inequality implies
$$
P( B < \lambda k/\sqrt{\log k} ) \le \frac{2\lambda k/\sqrt{\log k}}{(\lambda k/\sqrt{\log k})^2} 
\le \frac{2 \sqrt{\log k}}{\lambda k} \le 1/\sqrt{\log k}
$$
if $k$ is large.

For (ii) we use the supermartingale from Lemma \ref{super2}, which is the same as the one from Lemma \ref{super}, and simplify formulas as in the proof of Lemma \ref{life2}. If $q=P_{K,1}(T_{0,0} < T_L^+)$ then for $\lambda$ small optional stopping theorem gives
$$
q\le (1+\lambda/2)^{-\lambda k/\sqrt{\log k}} \le \exp(- \lambda^2 k/2\sqrt{\log k} ).  
$$

For (iii), we follow the argument in Lemma \ref{ignite}. We return to continuous time and note that by \eqref{Ydrift} the drift is 
$$
\le \mu = \lambda(1-p)k - p k - 1/\lambda
$$
so $Y_t - \mu t$ is a submartingale before time $V_L = T_{0,0} \wedge T^+_L$. Using the optional stopping theorem as before we conclude 
$$
E(V_L)\le \frac{L}{\mu} = \frac{(1-\ep)\lambda k}{1+\lambda} \cdot \frac{1}{ \lambda(1-p)k - p k - 1/\lambda}.
$$ 
Recalling the definition of $p$ 
\begin{align*}
\lambda(1-p)k - p k & = \lambda \left[ k - \frac{(1-\ep)\lambda k}{1+\lambda}\right] - \frac{(1-\ep)\lambda k}{1+\lambda} - \frac{1}{\lambda} \\
& = \lambda \left[  \frac{\ep\lambda k}{1+\lambda}\right] + \frac{\ep \lambda k}{1+\lambda} - \frac{1}{\lambda}.
\end{align*}
The first term is much smaller than the second so multiplying by $\lambda/\lambda$
$$
\frac{L}{\mu} \sim \frac{\lambda^2 k}{ \ep \lambda^2 k - (1+\lambda)} \sim 1/\ep,
$$ 
since $\lambda k^2 \to\infty$.
\end{proof}

\subsection{Power law graphs}

Suppose $P( d(x) \ge  m ) = 3^a m^{-a}$ for $m \ge 3$, where $a>2$ so that $Ed(x)^2<\infty$. In this case, the maximum degree vertex
on a graph with $n$ vertices  is $\sim n^{1/a}$, so the maximum eigenvalue $\Lambda \sim n^{1/2a}$ 
and the formula in \eqref{Wangcr} predicts that $\lambda_c \approx n^{-1/2a}$.
To prove an upper bound on $\lambda_c$ that is close to this, we suppose that $\lambda_0 = n^{-(1-2\eta)/2a}$. 

If $d(x) \ge k = n^{(1-\eta)/a}$ we call the vertex $x$ a star. 
$$
P(d(x) \ge  n^{(1-\eta)/a}) = 3^an^{-(1-\eta)}
$$ 
so if $n$ is large there are $\ge n^\eta$ stars with high probability. Now $\lambda_0^2 k = n^{\eta/a}$. By the estimate in the Lemma \ref{life2}, 
each individual star survives for time 
\beq
\ge \exp((1-4\eta)n^{\eta/a}/4).
\label{onestar}
\eeq
with probability $\ge 1 - 7 \exp(-(1-3\ep)n^{\eta/a}/4)$. The time 
$$
2r \kappa \le (4\nu \log n)\exp( O(\log ^2 n) )
$$ 
so \eqref{xfer} implies that with high probability the chosen star will transfer its infection to
its target by time $2r\kappa$ and we conclude that with high probability no lit star will die out during the process.

Combining these estimates shows that if $n$ is large then the number of infected stars $Y_k$ at time $2r\kappa k$ dominates 
a discrete time random walk that goes up by 1 with probability $p > e/(e+1)$ and down by 1 with probability $1-p$. Let $M\ge n^{\eta}$ be the number of stars.
Recalling that $((1-p)/p)^x$ is  harmonic function for a simple random walk that jumps up 1 with probability $p$, 
and down 1 with probability $1-p$ random walk, we see that $\exp(-Y_k)$ is a supermartingale while $Y_k \in (0,M)$, so 
$$
P_{0.9M}(T_0 < T_M) \le e^{-0.9M}.
$$
Since each cycle takes at least $0.1M(2r\kappa)$ units of time, we have survival for time $\exp(O(n^{\ep}))$
for some $\ep>0$.  

\subsection{Stretched exponential}

Suppose $P( d(x) \ge  m ) = \exp(-m^{1/b} + 3^{1/b})$ for $m \ge 3$, where $b>1$. In this case, the maximum degree vertex
on a graph with $n$ vertices  is $\sim \log^b n$, so the maximum eigenvalue $\Lambda \sim \log^{b/2} n$ 
and the formula in \eqref{Wangcr} predicts that $\lambda_c \approx \log^{-b/2} n$.

If $d(x) \ge k = \eta^b \log^b n$ we call the vertex $x$ a star. 
$$
P(d(x) \ge  \eta^b \log^b n) = \exp(3^{1/b})n^{-\eta},
$$ 
so if $n$ is large then the number of stars is $\ge n^{1-\eta}$ with high probability.

To see what value to take for $\lambda$ in our lower bound, we set the survival time equal to 1 over the probability of a
successful push, that is
$$
\exp(\lambda^2 \log^b n ) = (2/\lambda)^{2\nu \log n},
$$
or taking logs and rearranging
$$
\frac{\lambda^2}{\log(2/\lambda)} = 2\nu \log^{1-b} n.
$$
This means that the best upper bound we can hope to get is $\lambda_0 = (\log n)^{(1-\eta)(1-b)/2}$ versus the predicted value 
of $\log^{-b/2} n$.

By our choices we have
$$
\lambda_0^2 k =\eta^b( \log n)^{1+\eta(b-1) } 
$$
so Lemma \ref{life2} implies that the star survives for time 
$$
\ge \exp( (1-4\eta)\eta^b (\log n)^{1+\eta(b-1)} /4 )
$$
with probability $ \ge 1 -7 \exp( -(1-3\eta)\eta^b (\log n)^{1+\eta(b-1)} /4 )$. The time
$$
2r\kappa \le (4\nu \log n) \exp( \log n \cdot O(\log\log n))
$$ 
so \eqref{xfer} implies that with high probability the chosen star will transfer its infection to
its target by time $2r\kappa$ and we conclude that with high probability no lit star will die out during the process.
Comparing with random walk as in the previous proof, we have survival for time  $\exp(O(n^{1-\ep}))$ for any $\ep>0$.

\clearp

\end{document}